\providecommand{\U}[1]{\protect\rule{.1in}{.1in}}
\newcommand{\call}[1]{$\left( \ref{#1} \right)$}
\newcounter{todocounter}
\DeclareMathOperator{\tr}{Tr}
\DeclareMathOperator{\supp}{supp}
\newcommand{\bE}{\mathds{E}}
\newcommand{\bF}{\mathds{F}}
\newcommand{\bN}{\mathds{N}\,}
\newcommand{\bP}{\mathds{P}}
\newcommand{\bR}{\mathds{R}}
\newcommand{\rH}{\mathscr{H}}
\newcommand{\rJ}{\mathscr{J}}
\newcommand{\sF}{\mathcal{F}}
\newcommand{\sH}{\mathcal{H}}
\newcommand{\sJ}{\mathcal{J}}
\newcommand{\sT}{\mathcal{T}}
\newcommand{\ud}{\,\mathrm{d}}
\newcommand{\ind}{\mathds{1}}
\newcommand{\VV}{\Vert}
\newcommand{\xphi}[2]{\left(\begin{smallmatrix}#1 \\ #2\end{smallmatrix}\right)}
\renewcommand{\epsilon}{\varepsilon}
\renewcommand{\phi}{\varphi}
\newcommand{\etsa}{e^{(t-s)A}}
\newcommand{\Fen}{F_{\epsilon,n}}
\begin{document}

\title{Infinite-dimensional calculus under weak spatial regularity
 of the processes
}


\author{Franco Flandoli \and Francesco Russo \and Giovanni Zanco}


\institute{F. Flandoli \at
              Universit\`a di Pisa, Dipartimento di Matematica, Largo Bruno Pontecorvo 5, 56127, Pisa (Italia) \\
           \and
           F. Russo \at
              ENSTA ParisTech, Universit\'e Paris-Saclay, Unit\'e de Math\'ematiques appliqu\'ees, 828, boulevard des Mar\'echaux, F-91120 Palaiseau (France)\\
           \and
           G. Zanco \at
              Universit\`a di Pisa, Dipartimento di Matematica, Largo Bruno Pontecorvo 5, 56127, Pisa (Italia) \\
              Tel: +39 050 2213237
              \email{giovanni.zanco@gmail.com}              
}

\date{Received: date / Accepted: date}

\maketitle

\begin{abstract}
  Two generalizations of It\^o formula to infinite-dimensional spaces are given. The first one, in Hilbert spaces, extends the classical one by taking advantage of cancellations, when they occur in examples and it is applied to the case of a group generator. The second one, based on the previous one and a limit procedure, is an It\^o formula in a special class of Banach spaces, having a product structure with the noise in a Hilbert component; again the key point is the extension due to a cancellation. This extension to Banach spaces and in particular the specific cancellation are motivated by path-dependent It\^o calculus.
\keywords{Stochastic calculus in Hilbert (Banach) spaces \and It\^o Formula}
 \subclass{60H05 \and 60H15 \and 60H30}
\end{abstract}

\section{Introduction}
\label{sec:intro}
Stochastic calculus and in particular It\^{o} formula has been extended since
long time ago from the finite to the infinite-dimensional case. The final
result in infinite dimensions is rather similar to the finite-dimensional one
except for some important details. One of them is the fact that unbounded
(often linear) operators usually appears in infinite dimensions and It\^{o}
formula has to cope with them. If the It\^{o} process $X\left(  t\right)  $,
taking values in a Hilbert space $H$, satisfies an identity of the form
\begin{equation}
\label{1}dX\left(  t\right)  =AX\left(  t\right)  dt+B\left(  t\right)  dt+C\left(
t\right)  dW\left(  t\right), 
\end{equation}
where $A:D\left(  A\right)  \subset H\rightarrow H$ is an unbounded linear
operator, then in the It\^{o} formula for a functional $F:\left[  0,T\right]
\times H\rightarrow\mathds{R}$ we have the term%
\begin{equation}
\left\langle AX\left(  t\right)  ,D_{x}F\left(  t,X\left(  t\right)  \right)
\right\rangle, \label{2}%
\end{equation}
which requires $X\left(  t\right)  \in D\left(  A\right)  $ to be defined. The
fact that $AX\left(  t\right)  $ also appears in (\ref{1}) is not equally
dramatic:\ equation (\ref{1}) could be interpreted in weak or mild form,
depending on the case. But the term (\ref{2}) is less flexible. Sometimes it
helps to interpret it as $\left\langle X\left(  t\right)  ,A^{\ast}%
D_{x}F\left(  t,X\left(  t\right)  \right)  \right\rangle $ or similar
reformulations, but this requires strong assumptions on $F$. Thus, in general,
direct application of It\^{o} formula is restricted to the case when $X\left(
t\right)  \in D\left(  A\right)  $. Among the ways to escape this difficulty
let us mention the general trick to replace (\ref{1}) by a regularized
relation of the form $dX_{n}\left(  t\right)  =AX_{n}\left(  t\right)  dt+...$
where $X_{n}\left(  t\right)  \in D\left(  A\right)  $ (the proof of Theorem
2.1 below is an example) and the so called mild It\^{o} formula proved in \cite{DPJR}. Another example of mild It\^{o} formula, under the assumptions
that $DF \in D\left(  A^*\right)$ was the object of \cite{FabbriRusso},
 Theorem 4.8.

Less common but important for some classes of infinite-dimensional
problems, for example the so-called path-dependent problems, is the phenomenon that
$\frac{\partial F}{\partial t}\left(  t,x\right)  $ exists only when $x$ lives
in a smaller space than $H$, for instance $D\left(  A\right)  $ (we shall
clarify this issue in the examples of section \ref{sec:infinite_reformulation} below). And in It\^{o}
formula we have the term
\[
\frac{\partial F}{\partial t}\left(  t,X\left(  t\right)  \right),
\]
so again we need the condition $X\left(  t\right)  \in D\left(  A\right)  $.

The purpose of this paper is to give a generalization of It\^{o} formula in
infinite-dimensional Hilbert and Banach spaces which solves the difficulties described above when the two
terms%
\[
\left\langle AX\left(  t\right)  ,D_{x}F\left(  t,X\left(  t\right)  \right)
\right\rangle \text{\qquad and\qquad}\frac{\partial F}{\partial t}\left(
t,X\left(  t\right)  \right)
\]
compensate each other when they sum, although they are not well defined
separately. This happens in a number of cases related to hyperbolic equations
and path-dependent problems. This gives rise to a new It\^{o} formula in which the term
\[
\frac{\partial F}{\partial t}\left(  s,X(s)\right)  +\left\langle AX(s),D_{x}%
F\left(  s,X(s)\right)  \right\rangle,
\]
that is a priori defined only when $X(s)\in D(A)$ is replaced by a term
\[
G(s,X(s)),
\]
where $G\left(  t,x\right)  $ is an extension of $\frac{\partial F}{\partial t}\left(  t,x\right)  +\left\langle Ax,D_{x}F\left(  t,x\right)  \right\rangle$.

In this introduction, for simplicity of exposition, we have insisted on the formula in a Hilbert space H, but one of the main purposes of our work is the further extension to a suitable class of Banach spaces, motivated by precise applications. Since the notations in the Banach case require more preliminaries, we address to section \ref{sec:ItoBanach} for this generalization.

It\^o formulae for this kind of problems, both at the abstract level and in applications to path-dependent functionals, have been investigated by many authors, see \cite{DGR2012,DGROsaka2014}, \cite{d}, \cite{CF3,CF1}, \cite{Neerven1}; however the idea to exploit the compensation explained above appears to be new and could be relevant for several applications.

Related to these problems is also the study of Kolmogorov equations in Banach spaces, see for instance \cite{DPC}, \cite{M07}, \cite{FZ15}, \cite{ZPhd}.

The paper is organized as follows. In section 2 we give a first generalization
of It\^{o} formula in Hilbert spaces. It serves as a first step to prove a
generalization to Banach spaces, described in section 3. But this also applies
directly to examples of hyperbolic SPDEs, as described in section 4. Finally,
in sections 5 and 6, we apply the extension to Banach spaces to several
path-dependent problems: in section 5 we consider typical path-dependent
functionals; in section 6 we deal with  the important case when $F\left(
t,x\right)  $ satisfies an infinite-dimensional Kolmogorov equation.

\section{An It\^{o} formula in Hilbert spaces}
\label{sec:ItoHilbert}

Let $H,U$ be two separable Hilbert spaces (which will be always identified
with their dual spaces) and $A:D\left(  A\right)  \subset
H\rightarrow H$ be the infinitesimal generator of a strongly continuous
semigroup $e^{tA}$, $t\geq0$, in $H$. 
Let $\left(\Omega,\mathcal{F},\bP\right)  $ be a complete probability space, $\mathbb{F}%
=\left(  \mathcal{F}_{t}\right)  _{t\geq0}$ be a complete filtration and
$\left(  W\left(  t\right)  \right)  _{t\geq0}$ be a Wiener process in $U$
with nuclear covariance operator $Q$; we address to \cite{DPZrosso} for a
detailed description of these notions of stochastic calculus in Hilbert
spaces.\\
Let $B:\Omega\times\left[  0,T\right]  \rightarrow H$ be a
progressively measurable process with $\int_{0}^{T}\left\vert B\left(
s\right)  \right\vert ds<\infty$ a.s., $C:\Omega\times\left[  0,T\right]
\rightarrow L\left(  U,H\right)  $ be progressively measurable with
$\int_{0}^{T}\left\Vert C\left(  s\right)  \right\Vert _{L\left(  U,H\right)
}^{2}ds<\infty$ a.s. and $X^{0}:\Omega\rightarrow H$ be a random vector,
measurable w.r.t. $\mathcal{F}_{0}$; here $L\left(  U,H\right)  $ denotes the
space of bounded linear operators from $U$ to $H$, with the corresponding norm
$\left\Vert \cdot\right\Vert _{L\left(  U,H\right)  }$.

Let $X=\left(
X\left(  t\right)  \right)  _{t\in\left[  0,T\right]  }$ be the stochastic
process in $H$ defined by%
\begin{equation}
\label{eq:XH}
X\left(  t\right)  =e^{tA}X^{0}+\int_{0}^{t}e^{\left(  t-s\right)  A}B\left(
s\right)  ds+\int_{0}^{t}e^{\left(  t-s\right)  A}C\left(  s\right)  dW\left(
s\right),
\end{equation}
formally solution to the equation%
\begin{equation}
\label{eq:SDEstrong}
dX\left(  t\right)  =AX\left(  t\right)  dt+B\left(  t\right)  dt+C\left(
t\right)  dW\left(  t\right)\ ,\quad X(0)=X^0\  .
\end{equation}

We are interested here in examples where $X\left(  s\right)  \notin D\left(
A\right)  $ and also, for a given function $F:[0,T]\times H\to\bR$, the derivative $\frac{\partial F}{\partial s}\left(  s,x\right)  $
exists only for a.e. $s$ and for $x\in D\left(  A\right)  $. In these cases the two terms
$\frac{\partial F}{\partial s}\left(  s,X\left(  s\right)  \right)  $ and
$\left\langle AX\left(  s\right)  ,DF\left(  s,X\left(  s\right)  \right)
\right\rangle $ have no meaning, in general.\\
However, there are examples where the sum $\frac{\partial F}{\partial
s}\left(  s,X\left(  s\right)  \right)  +\left\langle AX\left(  s\right)
,DF\left(  s,X\left(  s\right)  \right)  \right\rangle $ has a meaning even if
the two addends do not, separately. This paper is devoted to this class of
examples.\\
We assume that there exists a Banach space $\widetilde E$ continuously embedded in $H$ such that
\begin{enumerate}[label=\emph{(\Roman*)}]
\item $D(A)\subset \widetilde E$;
\item $e^{tA}$ is strongly continuous in $\widetilde E$;
\item $X(t)\in\widetilde E$;
\item almost surely the set $\left\{X(t)\right\}_{t\in[0,T]}$ is relatively compact in $\widetilde E$.
\end{enumerate}
The space $\widetilde E$ can possibly coincide with the whole space $H$ but in general it is a smaller space endowed with a finer topology and it is not required to be a inner product space.\\
In the setting described above, our abstract result is the following one.
 \begin{theorem}
 \label{thm:Ito_Hilbert}Let $F\in C\left(\left[  0,T\right]  \times H;\mathds{R}\right)$ be twice differentiable with respect to its second variable, with $DF\in C\left(  \left[  0,T\right]  \times H;H\right)  $ and $D^2F\in C\left(  \left[  0,T\right]  \times H;L\left(H,H\right)  \right)  $.
and assume the time derivative $\frac{\partial F}{\partial t}(t,x)$ exists for $(t,x)\in\sT\times D(A)$ where $\sT\subset [0,T]$ has Lebesgue measure $\lambda\left(\sT\right)=T$ and does not depend on $x$. 
 Assume moreover that there exists a continuous function $G:\left[  0,T\right]  \times \widetilde E\rightarrow\mathds{R}$ such that
 \begin{equation*}
 G\left(  s,x\right)  =\frac{\partial F}{\partial s}\left(  s,x\right)
 +\left\langle Ax,DF\left(  s,x\right)  \right\rangle \qquad\text{for all }(t,x)\in\sT\times D\left(  A\right). 
\end{equation*}
Let $X$ be the process defined in \eqref{eq:XH}.
Then%
 \begin{align*}
 F\left(  t,X\left(  t\right)  \right)   &  =F\left(  0,X^{0}\right)  +\int%
 _{0}^{t}G\left(  s,X\left(  s\right)  \right)  ds\\
 &  +\int_{0}^{t}\left(  \left\langle B\left(  s\right)  ,DF\left(  s,X\left(
 s\right)  \right)  \right\rangle +\frac{1}{2}Tr\left(  C\left(  s\right)
 QC\left(  s\right)  ^{\ast}D^{2}F\left(  s,X\left(  s\right)  \right)
 \right)  \right)  ds\\
 &  +\int_{0}^{t}\left\langle DF\left(  s,X\left(  s\right)  \right)  ,C\left(
 s\right)  dW\left(  s\right)  \right\rangle ,
 \end{align*}
where $DF$ and $D^2F$ denote the first and second Fréchet differentials of $F$ with respect to its second variable (the same notation will be used everywhere in this article).
 \end{theorem}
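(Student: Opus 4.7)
The plan is to regularize the process $X$ by a Yosida approximation so that it takes values in $D(A)$, apply a classical It\^{o} formula at the regularized level, and then pass to the limit, exploiting assumptions \emph{(II)} and \emph{(IV)} to recover the term $G$.

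Concretely, I would set $J_{n}:=n(n-A)^{-1}$; then $J_{n}:H\to D(A)$ is uniformly bounded in $n$, $J_{n}y\to y$ in $H$ for every $y\in H$, and $J_{n}$ commutes with $e^{tA}$. Define $X_{n}(t):=J_{n}X(t)$. Because $J_{n}X^{0}\in D(A)$ and $J_{n}$ commutes with the semigroup, $X_{n}$ is $D(A)$-valued and satisfies the strong equation
\begin{equation*}
dX_{n}(t)=AX_{n}(t)\,dt+J_{n}B(t)\,dt+J_{n}C(t)\,dW(t),\qquad X_{n}(0)=J_{n}X^{0}.
\end{equation*}

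Next I would derive an It\^{o} formula for $F(t,X_{n}(t))$. Since $X_{n}(s)\in D(A)$ for every $s$ and $\sT$ has full Lebesgue measure, the identity
\begin{equation*}
\tfrac{\partial F}{\partial s}(s,X_{n}(s))+\langle AX_{n}(s),DF(s,X_{n}(s))\rangle=G(s,X_{n}(s))
\end{equation*}
holds by hypothesis for a.e.\ $s$. To cope with the fact that $\tfrac{\partial F}{\partial t}$ is defined only on $\sT\times D(A)$, so that the classical statement of It\^{o}'s formula cannot be quoted verbatim, I would mollify $F$ in time by convolution with a smooth kernel, apply the standard formula to the regularized $F_{\varepsilon}$ and to $X_{n}$, and then let $\varepsilon\to 0$, checking that $\tfrac{\partial F_{\varepsilon}}{\partial t}+\langle A\,\cdot\,,DF_{\varepsilon}\rangle$ converges to $G$ along the trajectory of $X_{n}$ in $L^{1}(0,T)$. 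This produces
\begin{align*}
F(t,X_{n}(t))&=F(0,J_{n}X^{0})+\int_{0}^{t}G(s,X_{n}(s))\,ds+\int_{0}^{t}\langle J_{n}B(s),DF(s,X_{n}(s))\rangle\,ds\\
&\quad+\tfrac{1}{2}\int_{0}^{t}\tr\bigl(J_{n}C(s)QC(s)^{\ast}J_{n}^{\ast}D^{2}F(s,X_{n}(s))\bigr)\,ds\\
&\quad+\int_{0}^{t}\langle DF(s,X_{n}(s)),J_{n}C(s)\,dW(s)\rangle.
\end{align*}

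The final and most delicate step is the passage $n\to\infty$. The crucial contribution is the drift involving $G$. Because $e^{tA}$ is strongly continuous on $\widetilde E$ by \emph{(II)}, the Laplace representation $J_{n}=n\int_{0}^{\infty}e^{-nt}e^{tA}\,dt$ shows that $\{J_{n}\}$ is uniformly bounded in $L(\widetilde E)$ and $J_{n}y\to y$ in $\widetilde E$ for every $y\in\widetilde E$. Combined with the relative compactness of $\{X(s)\}_{s\in[0,T]}$ in $\widetilde E$ provided by \emph{(IV)}, a standard equicontinuity argument on compact sets yields
\begin{equation*}
\sup_{s\in[0,T]}\Vert X_{n}(s)-X(s)\Vert_{\widetilde E}\longrightarrow 0\qquad\text{a.s.}
\end{equation*}
Continuity of $G$ on the compact subset of $[0,T]\times\widetilde E$ traced by $(s,X(s))$ and its perturbations then gives uniform convergence of $G(s,X_{n}(s))$ to $G(s,X(s))$, and hence of the $G$-integral. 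The remaining Bochner-type integrals are handled by dominated convergence, using continuity of $DF$ and $D^{2}F$ on $H$ together with the strong convergence $J_{n}\to I$ on $H$; the stochastic integral is controlled via It\^{o}'s isometry. The main obstacle I foresee is the interplay between the two weaknesses in the hypothesis on $F$: the time derivative is defined only on a subset of $[0,T]\times H$, while its extension $G$ is continuous only on the finer space $[0,T]\times\widetilde E$. The Yosida step makes an It\^{o} formula available, but the only route to the drift identity in the limit is through the $\widetilde E$-topology; this is precisely why assumptions \emph{(II)} and \emph{(IV)} are indispensable, and why the mollification-in-time step must be performed carefully enough that the drift identity at the level of $X_{n}$ is obtained pathwise in $L^{1}(0,T)$, ready to be combined with the $\widetilde E$-uniform convergence $X_{n}\to X$.
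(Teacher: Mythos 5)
Your proposal follows essentially the same route as the paper's proof: Yosida approximation $X_{n}=J_{n}X$ satisfying the strong equation, mollification of $F$ in time, application of the classical Hilbert-space It\^o formula at the regularized level, and a passage to the limit in which the critical drift term is identified with $G$ through the compactness of the paths in $\widetilde E$, the equiboundedness and strong convergence $J_{n}\to I$ on $\widetilde E$ (coming from the strong continuity of $e^{tA}$ there), and the continuity of $G$, the remaining terms being handled by dominated convergence and convergence in probability of the stochastic integrals. The only cosmetic difference is the order of the limits: the paper works with the single two-parameter approximation $F_{\epsilon,n}(t,x)=\left(\rho_{\epsilon}\ast F\left(\cdot,J_{n}x\right)\right)(t)$ and passes to the limit in $\epsilon$ and $n$ together, whereas you send $\epsilon\to 0$ first at fixed $n$; both are valid and rest on the same ingredients.
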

For the proof we need a preliminary result, namely a ``traditional'' It\^o formula that holds when $F$ is smooth.
\begin{proposition}
\label{prop:ItoFrancesco}
Let $\beta:\Omega\times [0,T]\to H$ and $\theta:\Omega\times[0,T]\to L(U,H)$ be two progressively measurable processes such that $\left\vert\beta(s)\right\vert$ and $\left\Vert\theta(s)\right\Vert_{L(U,H)}^2$ are integrable on $[0,T]$ a.s.; consider the It\^o process $Z$ in $H$ given by
\begin{equation*}
  Z(t)=Z^0+\int_0^t\beta(s)\ud s+\int_0^t\theta(s)\ud W(s)\ .
\end{equation*}
If $F\in C^{1,2}\left(  \left[  0,T\right]  \times H\right)$
 the following identity holds (in probability):
\begin{align*}
F\left(  t,X\left(  t\right)  \right)   &  =F\left(  0,X^{0}\right)  +\int%
_{0}^{t}\frac{\partial F}{\partial s}\left(  s,X\left(  s\right)
\right)\ud s\\
&  +\int_{0}^{t}\left(  \left\langle \beta\left(  s\right)  ,DF\left(  s,X\left(
s\right)  \right)  \right\rangle +\frac{1}{2}Tr\left(  \theta\left(  s\right)
Q\theta\left(  s\right)  ^{\ast}D^{2}F\left(  s,X\left(  s\right)  \right)
\right)  \right)  ds\\
&  +\int_{0}^{t}\left\langle DF\left(  s,X\left(  s\right)  \right)  ,\theta\left(
s\right)  dW\left(  s\right)  \right\rangle\ .
\end{align*}
\end{proposition}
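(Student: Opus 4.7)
The statement is the classical infinite-dimensional It\^o formula of Da Prato and Zabczyk, and the plan is to reproduce the standard proof by localization followed by a finite-dimensional cylindrical reduction. First I would localize the coefficients by introducing the stopping times
\begin{equation*}
\tau_n = \inf\Bigl\{t\in[0,T] : \int_0^t |\beta(s)|\,\ud s + \int_0^t \|\theta(s)\|_{L(U,H)}^2\,\ud s > n\Bigr\}\wedge T,
\end{equation*}
which satisfy $\tau_n\uparrow T$ a.s.\ by the integrability hypotheses. Proving the formula for the stopped process $Z(\cdot\wedge\tau_n)$ (so that $\beta$, $\theta$ are effectively bounded in the appropriate norms) is enough, since a passage to the limit in $n$ yields the full statement in probability.

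Next, fix an orthonormal basis $\{e_k\}_{k\in\bN}$ of $H$ and let $P_N$ be the orthogonal projection onto $\mathrm{span}\{e_1,\dots,e_N\}$. The projected process
\begin{equation*}
P_N Z(t) = P_N Z^0 + \int_0^t P_N\beta(s)\,\ud s + \int_0^t P_N\theta(s)\,\ud W(s)
\end{equation*}
is, through the natural identification $P_N H\cong\bR^N$, a genuine $\bR^N$-valued It\^o process. Since $F$ restricted to $P_N H$ is of class $C^{1,2}$, the classical finite-dimensional It\^o formula applies to $F(t,P_N Z(t))$ and yields exactly the claimed identity, with $P_N$ inserted in front of $Z$, $\beta$, $\theta$ and inside the derivatives of $F$.

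The conclusion then follows by letting $N\to\infty$. Pointwise convergence $P_N Z(s)\to Z(s)$ in $H$ together with the continuity of $F$, $DF$, $D^2F$ gives pointwise convergence of all integrands; the Bochner integral converges by dominated convergence on $[0,\tau_n]$, using that $DF$ and $D^2F$ are uniformly bounded on compact subsets of $[0,T]\times H$ and that $\{P_N Z(s)\}_{N\geq 1}\cup\{Z(s)\}$ is relatively compact. The stochastic integral converges in probability via It\^o's isometry combined with the same uniform bounds on $DF$. The main technical point is the trace term: one must show
\begin{equation*}
\tr\bigl(P_N\theta(s)Q\theta(s)^* P_N\, D^2F(s,P_N Z(s))\bigr)\longrightarrow \tr\bigl(\theta(s)Q\theta(s)^* D^2F(s,Z(s))\bigr).
\end{equation*}
This is handled by expanding the trace in the basis $\{e_k\}$ and using the trace-class property of $\theta(s)Q\theta(s)^*$ (a consequence of the nuclearity of $Q$ and the stopped boundedness of $\theta$), which dominates the tail of the series uniformly in $N$ and allows termwise passage to the limit. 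Alternatively, one may simply cite Theorem~4.17 of \cite{DPZrosso}, where precisely this formula is stated and proved.
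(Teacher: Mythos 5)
Your proposal is correct, but it takes a genuinely different route from the paper. You give the classical Da Prato--Zabczyk-style proof: localize with stopping times, project onto $P_NH\cong\bR^N$, apply the finite-dimensional It\^o formula, and pass to the limit using the a.s.\ compactness of the paths of $Z$ in $H$ together with the uniform convergence of $P_N$ to the identity on compact sets, with the trace term controlled through the trace-class property of $\theta(s)Q\theta(s)^\ast$ (all of these steps are sound; the tail estimate for the trace is cleanest if you expand along an eigenbasis of $Q$, dominating by $\Vert D^2F\Vert\,\Vert\theta(s)\Vert^2\tr Q$ uniformly in $N$). The paper instead obtains the proposition as a corollary of stochastic calculus via regularization: it invokes the It\^o formula of \cite{DGROsaka2014}, which expresses $F(t,Z(t))$ through the forward integral $\int_0^t\langle DF,\ud^-Z\rangle$ and the global quadratic variation $\widetilde{[Z,Z]}$, then identifies the forward integral with the It\^o integral plus the drift term via theorem 3.6 and proposition 3.8 of \cite{FabbriRusso}, and converts the quadratic-variation integral into the trace term via section 3.3 and proposition 6.12 of \cite{DGR}. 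Your route is elementary and self-contained, relying only on the finite-dimensional formula; the paper's is a short citation-based derivation that stays inside the regularization framework the authors use elsewhere. One caveat on your closing alternative: citing Theorem 4.17 of \cite{DPZrosso} is not quite a drop-in substitute, since that theorem assumes $F$, $\partial_tF$, $DF$, $D^2F$ uniformly continuous on \emph{bounded} subsets of $[0,T]\times H$, which in infinite dimensions is strictly stronger than the plain $C^{1,2}$ regularity assumed here; it is precisely your compactness-of-paths argument that closes this gap, so the self-contained version of your proof is the one to keep.
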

\begin{proof}
According to \cite{DGROsaka2014} we have that
  \begin{align}
    \label{eq:Francesco1}
    F\left(t,X(t)\right)&=F\left(0,X(0)\right)+\int_0^t\left\langle DF\left(s,X(s)\right),\ud^-X(s)\right\rangle  \nonumber\\
    &+\int_0^t\frac{\partial F}{\partial t}\left(s,X(s)\right)\ud s+\frac{1}{2}\int_0^tD^2F\left(s,X(s)\right)\ud\widetilde{\left[X,X\right]}(s),
  \end{align}
  where $\ud^-X$ denotes the integral via regularization introduced in \cite{DGROsaka2014}. We remark that $\widetilde{\left[X,X\right]}$ is here the global quadratic variation of the process in \call{eq:XH}.\\
By theorem 3.6 and proposition 3.8 of \cite{FabbriRusso} we get
\begin{multline*}
  \int_0^t\left\langle DF\left(s,X(s)\right),\ud^-X(s)\right\rangle\\
=\int_0^t\left\langle DF\left(s,X(s)\right),C(s)\ud W(s)\right\rangle+\int_0^t\left\langle DF\left(s,X(s)\right),AX(s)+B(s)\right\rangle\ud s\ .
\end{multline*}
By section 3.3 in \cite{DGR}
\begin{equation*}
  \left[X,X\right]^{\ud z}(t)=\int_0^tC(s)Q^{\nicefrac{1}{2}}\left(C(s)Q^{\nicefrac{1}{2}}\right)^\ast\ud s,
\end{equation*}
where $\left[X,X\right]^{\ud z}$ is the Da Prato-Zabczyk quadratic variation; hence proposition 6.12 of \cite{DGR} implies that
\begin{equation*}
  \int_0^tD^2F\left(s,X(s)\right)\ud\widetilde{\left[X,X\right]}(s)=\int_0^t\tr\left[D^2F\left(s,X(s)\right)C(s)Q^{\nicefrac{1}{2}}\left(C(s)Q^{\nicefrac{1}{2}}\right)^\ast\right]\ud s\ .
\end{equation*}
This concludes the proof.
\end{proof}

\begin{proof}[Proof of theorem \ref{thm:Ito_Hilbert}]
Let $\left\{\rho_\epsilon\right\}_{\epsilon\in(0,1]}$, $\rho_\epsilon:\bR\to\bR$, be a family of mollifiers with $\supp(\rho_\epsilon)\subseteq [0,1]$ for every $\epsilon$. For $x\in H$ set $F(t,x)=F(0,x)$ if $t\in[-1,0)$ and $F(t,x)=F(T,x)$ if $t\in(T,T+1]$.\\
Denote by $J_n$ the Yosida approximations $J_{n}=n\left(  n-A\right)^{-1}:H\rightarrow D\left(  A\right)  $, defined for every $n\in\mathds{N}$, which satisfy $\lim_{n\rightarrow\infty}J_{n}x=x$ for every $x\in H$. One also has $\lim_{n\rightarrow\infty}J_{n}^{\ast}x=x$, $\lim_{n\rightarrow\infty}J_{n}^{2}x=x$ and $\lim_{n\rightarrow\infty}\left(  J_{n}^{2}\right)  ^{\ast}x=x$ for every $x\in H$, used several times below, along with the fact that the operators\ $J_{n}$ and $J_{n}^{\ast}$ are equibounded. All these facts are well known and can be found also in \cite{DPZrosso}. Moreover it is easy to show that the family $J_n^2$ converges uniformly on compact sets to the identity (in the strong operator topology). Since $A$ generates a strongly continuous semigroup in $\widetilde E$ as well, all the properties of $J_n$ and $J_n^2$ just listed hold also in $\widetilde E$ (with respect to its topology).\\
Define now $\Fen:[0,T]\times H\to\bR$ as
\begin{equation*}
  \Fen(t,x)=\left(\rho_\epsilon\ast F\left(\cdot,J_nx\right)\right)(t)\ .
\end{equation*}
It is not difficult to show that $\Fen\in C^{1,2}\left([0,T]\times H;\bR\right)$.
Notice also that
\begin{align*}
\frac{\partial \Fen}{\partial t}\left(  t,x\right)   &  =\left(\dot\rho_\epsilon\ast F \left(  \cdot,J_{n}x\right)\right)(t)\ ,  \\
\left\langle D\Fen\left(  t,x\right)  ,h\right\rangle  &  =\left(\rho_\epsilon\ast\left\langle DF\left(  \cdot,J_{n}x\right)  ,J_{n}h\right\rangle\right)(t) \\
D^{2}\Fen\left(  t,x\right)  \left(  h,k\right)   &  =\left(\rho_\epsilon\ast D^{2}F\left(t,J_{n}x\right)  \left(  J_{n}h,J_{n}k\right)\right)(t) \  .
\end{align*}
Moreover
\begin{equation*}
  \frac{\partial\Fen}{\partial t}(t,x)=\left(\rho_\epsilon\ast\frac{\partial F}{\partial t}\left(\cdot,J_nx\right)\right)(t)
\end{equation*}
on $\sT\times D(A)$. To see this take $(t,x)\in \sT\times D(A)$, consider the limit
\begin{align}
\nonumber  \lim_{a\to 0}\frac{1}{a}\left[\Fen(t+a,x)-\Fen(t,x)\right]&=\lim_{a\to 0}\frac{1}{a}\int_\bR\rho_\epsilon(r)\left[F\left(t+a-r,J_nx\right)-F\left(t-r,J_nx\right)\right]\ud r \\
\label{eq:DFen2}                                                   &=\lim_{a\to 0}\frac{1}{a}\int_{B_\epsilon(0)}\rho_\epsilon(r)\left[F\left(t+a-r,J_nx\right)-F\left(t-r,J_nx\right)\right]\ud r
\end{align}
and set $R_\epsilon^t:=\left\{r\in B_\epsilon(0)\colon t-r\in \sT_0\right\}$, where $\sT_0:=[-1,0)\cup\sT\cup(T,T+1]$.\\
Since $t-R_\epsilon^t=\left(t-B_\epsilon(0)\right)\cap\sT_0$, we have that $\lambda\left(R_\epsilon^t\right)=\lambda\left(B_\epsilon(0)\right)$, hence we can go on from \call{eq:DFen2} finding
\begin{align*}
  \lim_{a\to 0}\frac{1}{a}\big[\Fen(t+a,x)-\Fen&(t,x)\big]=\lim_{a\to 0}\frac{1}{a}\int_{R_{\epsilon}^t}\rho_\epsilon(r)\left[F\left(t+a-r,J_nx\right)-F\left(t-r,J_nx\right)\right]\ud r\\
                                                          &=\int_{R_{\epsilon}^t}\rho_\epsilon(r)\frac{\partial F}{\partial t}\left(t-r,J_nx\right)\ud r\\
                                                          &=\left(\rho_\epsilon\ast\frac{\partial F}{\partial t}\left(\cdot,J_nx\right)\right)(t).
\end{align*}
Now set $X_{n}\left(  t\right)  =J_{n}X\left(  t\right)  $, $X_{n}^{0}=J_{n}X^{0}$, $B_{n}\left(  t\right)  =J_{n}B\left(  t\right)  $, $C_{n}\left(  t\right)=J_{n}C\left(  t\right)  $. Since $J_{n}$ commutes with $e^{tA}$, we have
\begin{equation*}
X_{n}\left(  t\right)  =e^{tA}X_{n}^{0}+\int_{0}^{t}e^{\left(  t-s\right)A}B_{n}\left(  s\right)  ds+\int_{0}^{t}e^{\left(  t-s\right)  A}C_{n}\left(s\right)  dW\left(  s\right)  \ .
\end{equation*}
Moreover, $X_{n}\left(  t\right)$, $B_n(t)$, $C_n(t)$ belong to $ D\left(  A\right)  $ for a.e. $t\in\left[  0,T\right]  $, with $\left\vert AX_{n}\left(  \cdot\right)\right\vert $ integrable $\bP$-a.s.; hence
\begin{equation*}
  X_n(t)=X_n^0+\int_0^t\left[AX_n(s)+B_n(s)\right]\ud s+\int_0^tC_n(s)\ud W(s)
\end{equation*}
and by the It\^{o} formula in Hilbert spaces given in proposition \ref{prop:ItoFrancesco} above we have
\begin{align*}
\Fen\left(t,X_{n}\left(t\right)\right) & =\Fen\left(0,X_{n}^{0}\right)+\int_{0}^{t}\left(\left\langle AX_{n}\left(s\right),D\Fen\left(s,X_{n}\left(s\right)\right)\right\rangle+\frac{\partial\Fen}{\partial s}\left(s,X_{n}\left(s\right)\right)\right)\ud s\\
&  +\int_{0}^{t}\left\langle B_{n}\left(s\right)  ,D\Fen\left(s,X_{n}\left(s\right)\right)\right\rangle \ud s+\int_{0}^{t}\left\langle D\Fen\left(s,X_{n}\left(s\right)\right),C_{n}\left(s\right)\ud W\left(s\right)\right\rangle \\
&  +\frac{1}{2}\int_{0}^{t}Tr\left[C_{n}\left(s\right)QC_{n}\left(s\right)^{\ast}D^{2}\Fen\left(s,X_{n}\left(s\right)\right)\right]\ud s.
\end{align*}
Let us prove the convergence (as $n\to\infty$ and $\epsilon\to 0$) of each term to the corresponding one of the formula stated by the theorem. We fix $t$ and prove the a.s. (hence in probability) convergence of each term, except for the convergence in probability of the It\^{o} term; this yields the conclusion. 

Given $\left(\omega,t\right)  $, we have $\Fen\left(t,X_{n}\left(\omega,t\right)\right)=\rho_\epsilon \ast F\left(\cdot,J_{n}^{2}X\left(\omega,t\right)\right)(t)$ and thus
\begin{align*}
\left\vert\Fen\left(t,X_n(\omega,t)\right)-F\left(t,X(\omega,t)\right)\right\vert&=\left\vert\int_\bR\rho_\epsilon(r)F\left(t-r,J_n^2X(\omega,t)\right)\ud r-F\left(t,X(\omega,t)\right)\right\vert\\
&\leq\int_{B_\epsilon(0)}\rho_\epsilon(r)\left\vert F\left(t-r,J_n^2X\left(\omega,t\right)\right)-F\left(t,X(\omega,t)\right)\right\vert\ud r,
\end{align*}
which is arbitrarily small for $\epsilon$ small enough and $n$ big enough, because $J_{n}^{2}$ converges strongly to the identity and $F$ is continuous;
similarly 
\begin{equation*}
\lim_{\substack{\epsilon\to 0\\n\rightarrow\infty}}\Fen\left(0,X_{n}^{0}\left(\omega\right)\right)=F\left(0,X^{0}\left(\omega\right)\right)\ .
\end{equation*}
From now on we work in the set $\Omega_1$ where $X$ has relatively compact paths in $\widetilde E$ (hence in $H$).
Fix $\delta>0$. Since for $\omega\in\Omega_1$ the set $\left\{X(\omega,s)\right\}_{s\in[0,t]}$ is relatively compact, we have that $J_n^2X(s)$ converges uniformly with respect $s$ to $X(s)$, hence there exists $N\in\bN$ such that for any $n>N$ $\left\vert J_n^2X(s)-X(s)\right\vert<\nicefrac{\delta}{2}$ for all $s$; moreover the set $\left\{J_nX(s)\right\}_{n,s}$ is bounded.\\
The family $\left\{B_{\nicefrac{\delta}{2}}\left(X(s)\right)\right\}_{s\in[0,t]}$ is an open cover of $\left\{X(s)\right\}_{s\in[0,t]}$; by compactness it admits a finite subcover $\left\{B_{\nicefrac{\delta}{2}}\left(X(s_i)\right)\right\}_{i=1,\dots,M}$ for some finite set $\left\{s_1,\dots,s_M\right\}\subset[0,t]$, therefore for any $s$ there exists $i\in\{1,\dots,N\}$ such that $\left\vert X(s)-X\left(s_i\right)\right\vert<\nicefrac{\delta}{2}$ and
  \begin{equation*}
    \left\vert J_n^2X(s)-X\left(s_i\right)\right\vert\leq\left\vert J_n^2X(s)-X(s)\right\vert+\left\vert X(s)-X\left(s_i\right)\right\vert<\delta
  \end{equation*}
for $n>N$ where $N$ does not depend on $s$ since the convergence is uniform. This shows that the set $\left\{J_n^2X(s)\right\}_{n,s}$ is totally bounded both in $\widetilde E$ and in $H$.\\
Therefore we can study the convergence of the other terms as follows. First we consider the difference
\begin{align*}
\left\vert\int_0^t\right.&\left\langle B_n(s),D\Fen\left(s,X_n(s)\right)\right\rangle\ud s-\left.\int_0^t\left\langle B(s),DF\left(s,X(s)\right)\right\rangle\ud s\right\vert\\
&\leq\left\vert\int_0^t\left\langle J_n^2B(s),\left(\rho_\epsilon\ast DF\left(\cdot,J_n^2X(s)\right)\right)(s)\right\rangle\ud s-\int_0^t\left\langle J_n^2B(s),DF\left(s,X(s)\right)\right\rangle\ud s\right\vert\\
&\phantom{\leq}+\left\vert\int_0^t\left\langle J_n^2B(s)-B(s),DF\left(s,X(s)\right)\right\rangle\ud s\right\vert.
\end{align*}
The second term in this last sum is bounded by
\begin{equation*}
  \int_0^t\left\vert J_n^2B(s)-B(s)\right\vert\left\vert DF\left(s,X(s)\right)\right\vert\ud s
\end{equation*}
and $\left\{X(s)\right\}_s$ is compact, hence $\left\vert DF\left(s,X(s)\right)\right\vert$ is bounded uniformly in $s$ and, since the $J_n^2$ are equibounded and converge strongly to the identity and $B$ is integrable, Lebesgue's dominated convergence theorem applies. The first term in the previous sum instead is bounded by
\begin{equation}
\label{eq:ItoHB2}
  \int_0^t\left\vert J_n^2B(s)\right\vert\int_{B_\epsilon(0)}\rho_\epsilon(r)\left\vert DF\left(s-r,J_n^2X(s)\right)-DF\left(s,X(s)\right)\right\vert\ud r\ud s\ ;
\end{equation}
by the discussion above the set $[0,t]\times\left(\left\{J_n^2X(s)\right\}_{n,s}\cup\left\{X(s)\right\}_s\right)$ is contained in a compact subset of $[0,T]\times H$, hence $\left\vert DF\right\vert$ is bounded on that set uniformly in $s$ and $r$. Thanks again to the equicontinuity of the operators $J_n^2$ and the integrability of $B$, \call{eq:ItoHB2} is shown to go to $0$ by the dominated convergence theorem and the continuity of $DF$.\\
About the critical term involving $G$ we have
\begin{align*}
  \left\vert\int_0^t\right.&\left(\frac{\partial\Fen}{\partial t}\left(s,X_n(s)\right)+\left\langle AX_n(s),D\Fen\left(s,X_n(s)\right)\right\rangle\right)\ud s-\left.\int_0^t G\left(s,X(s)\right)\ud s\right\vert\\
  &\leq\int_{[0,t]\cap\sT}\left\vert\rho_\epsilon\ast\left(\frac{\partial F}{\partial t}\left(\cdot,J_n^2X(s)\right)+\left\langle AJ_n^2X(s),DF\left(\cdot,J_n^2X(s)\right)\right\rangle\right)(s)-G\left(s,X(s)\right)\right\vert\ud s\\
    &=\int_{[0,t]\cap\sT}\left\vert\left(\rho_\epsilon\ast G\left(\cdot,J_n^2X(s)\right)\right)(s)-G\left(s,X(s)\right)\right\vert\ud s\\
    &\leq\int_{[0,t]\cap\sT}\int_{B_\epsilon(0)}\rho_\epsilon(r)\left\vert G\left(s-r,J_n^2X(s)\right)-G\left(s,X(s)\right)\right\vert\ud r\ud s
\end{align*}
and this last quantity goes to $0$ by compactness and continuity of $G$ in the same way as the previous term (now with respect to the topology on $\widetilde E$).\\
For the It\^{o} term we have
\begin{multline}
  \label{eq:ItoCH}\int_0^t\left\vert C^\ast(s)\left(J_n^2\right)^\ast\left(\rho_\epsilon\ast DF\left(\cdot,J_n^2X(s)\right)\right)(s)-C^\ast(s)DF\left(s,X(s)\right)\right\vert^2\ud s\\
  \leq\int_0^t\left\Vert C(s)\right\Vert^2\left\vert\left( J_n^\ast\right)^2\rho_\epsilon\ast Df\left(\cdot,J_n^2X(s)\right)(s)-DF\left(s,X(s)\right)\right\vert^2\ud s\ ;
\end{multline}
writing
\begin{multline*}
  \left\vert\left( J_n^\ast\right)^2\rho_\epsilon\ast DF\left(\cdot,J_n^2X(s)\right)(s)-DF\left(s,X(s)\right)\right\vert\\
\leq \left\vert\left(J_n^\ast\right)^2\rho_\epsilon\ast DF\left(\cdot,J_n^2X(s)\right)(s)-\left(J_n^\ast\right)^2DF\left(s,X(s)\right)\right\vert+\left\vert\left(J_n^\ast\right)^2DF\left(s,X(s)\right)-DF\left(s,X(s)\right)\right\vert,
\end{multline*}
it is immediate to see that the right-hand side of \call{eq:ItoCH} converges to $0$ almost surely, hence
\begin{equation*}
  \int_0^t\left\langle D\Fen\left(s,X_n(s)\right),C_n(s)\ud W(s)\right\rangle\to\int_0^t\left\langle DF\left(s,X(s)\right),C(s)\ud W(s)\right\rangle
\end{equation*}
in probability.\\
It remains to treat the trace term. Let $\left\{h_j\right\}$ be an orthonormal complete system in $H$; then 
\begin{multline}
\label{eq:ItoTr1}
  \left\vert\int_0^t\tr\left[C_n(s)QC_n(s)^\ast D^2\Fen\left(s,X_n(s)\right)\right]\ud s-\int_0^t\tr\left[C(s)QC(s)^\ast DF\left(s,X(s)\right)\right]\ud s\right\vert\\
\leq\int_0^t\sum_j\left\vert\left\langle\left[J_nC(s)QC(s)^\ast\left(J_n^\ast\right)^2\rho_\epsilon\ast D^2F\left(\cdot,J_n^2X(s)\right)(s)J_n-C(s)QC(s)^\ast D^2F\left(s,X(s)\right)\right]h_j,h_j\right\rangle\right\vert\ud s.
\end{multline}
Now for any $j$
\begin{align*}
  \left\vert J_n\right. &C(s)QC(s)^\ast\left(J_n^\ast\right)^2\rho_\epsilon\ast D^2F\left(\cdot,J_n^2X(s)\right)(s)\left.J_nh_j-C(s)QC(s)^\ast D^2F\left(s,X(s)\right)h_j\right\vert\\
  &\leq\left\vert J_nC(s)QC(s)^\ast\left(J_n^\ast\right)^2\rho_\epsilon\ast D^2F\left(\cdot,J_n^2X(s)\right)(s)J_nh_j-C(s)QC(s)^\ast D^2F\left(s,X(s)\right)J_nh_j\right\vert\\
  &\phantom{\leq}+\left\Vert C(s)QC(s)^\ast D^2F\left(s,X(s)\right)\right\Vert\cdot\left\vert J_nh_j-h_j\right\vert\ .
\end{align*}
The second term in the sum converges to $0$ thanks to the properties of $J_n$; the first one is bounded by the sum
\begin{multline}
  \label{eq:ItoTr2}
  \left\vert J_nC(s)QC(s)^\ast\left(J_n^\ast\right)^2\rho_\epsilon\ast D^2F\left(\cdot,J_n^2X(s)\right)(s)J_nh_j-J_nC(s)QC(s)^\ast\left(J_n^\ast\right)^2D^2F\left(s,X(s)\right)J_nh_j\right\vert\\
+\left\vert\left[J_nC(s)QC(s)^\ast\left(J_n^\ast\right)^2-C(s)QC(s)^\ast\right]D^2F\left(s,X(s)\right)J_nh_j\right\vert,
\end{multline}
whose first addend is less or equal to
\begin{equation*}
  \left\Vert J_nC(s)QC(s)^\ast\left(J_n^\ast\right)^2\right\Vert\int_{B_\epsilon(0)}\rho_\epsilon\left\vert D^2F\left(s-r,J_n^2X(s)\right)-D^2F\left(s,X(s)\right)\right\vert\ \left\vert J_nh_j\right\vert\ud r,
\end{equation*}
which is shown to go to zero as before. For the second addend of \call{eq:ItoTr2} notice that for any $k\in H$
\begin{align*}
  \Big\vert\big[ J_nC(s)QC(s)^\ast&\left(J_n^\ast\right)^2-C(s)QC(s)^\ast\big]k\Big\vert\\
  &\leq\left\vert\left[J_nC(s)QC(s)^\ast\left(J_n^\ast\right)^2-J_nC(s)QC(s)^\ast\right]k\right\vert+\left\vert\left[J_nC(s)QC(s)^\ast-C(s)QC(s)^\ast\right]k\right\vert\\
  &\leq\left\Vert J_nC(s)QC(s)^\ast\right\Vert\ \left\vert\left(J_n^\ast\right)^2k-k\right\vert+\left\vert J_nC(s)QC(s)^\ast k-C(s)QC(s)^\ast k\right\vert,
\end{align*}
which tends to $0$ as $n$ tends to $\infty$.\\
The same compactness arguments used in the previous steps, the continuity of $D^2F$ and the equiboundedness of the family $\left\{J_n\right\}$ allow to apply Lebesgue's dominated convergence theorem both to the series and to the the integral with respect to $s$ in \call{eq:ItoTr1}.
This concludes the proof.
\end{proof}

\section{Extension to particular Banach spaces}
\label{sec:ItoBanach}
In this section we consider the following framework. Let $H_1$ be a separable Hilbert space with scalar product $\langle \cdot\rangle_1$ and norm $\VV \cdot\VV_1$ and let $E_2$ be a Banach space, with norm $\VV\cdot\VV_{E_2}$ and duality pairing denoted by $\langle\cdot,\cdot\rangle$, densely and continuously embedded in another separable Hilbert space $H_2$ with scalar product and norm denoted respectively by $\langle\cdot\rangle_2$ and $\VV\cdot\VV_2$. Then set $H:=H_1\times H_2$ so that
\begin{equation*}
  E:=H_1\times E_2\subset H
\end{equation*}
with continuous and dense embedding. We adopt here the standard identification of $H$ with $H^\ast$ so that
\begin{equation*}
  E\subset H\cong H^\ast\subset E^\ast\ \text{.}
\end{equation*}
Our aim here is to extend the results exposed so far to situations in which the process $X$ lives in a subset of $E$ but the noise only acts on $H_1$.\\
\begin{example}
  In the application of this abstract framework to path-dependent functionals (see section \ref{sec:infinite_reformulation}), we will choose the spaces
  \begin{equation*}
    H=\bR^d\times L^2\left(-T,0;\bR^d\right)
  \end{equation*}
and
\begin{equation*}
  E=\bR^d\times \left\{\phi\in C\left([-T,0);\bR^d\right)\colon \exists\lim_{s\to 0^-}\phi(s)\in\bR^d\right\}\ \text{.}
\end{equation*}
\end{example}
Similarly to the setup we introduced in section \ref{sec:ItoHilbert}, consider a complete probability space $\left(\Omega,\sF,\bP\right)$ with a complete filtration $\bF=\left(\sF_t\right)_{t\geq 0}$ and a Wiener process $\left( W(t) \right)_{t\geq 0}$ in another separable Hilbert space $U$ with nuclear covariance operator $Q$.\\
Consider a linear operator $A$ on $H$ with domain $D(A)\subset E$ and assume that it generates a strongly continuous semigroup $e^{tA}$ in $H$.
Let $B:\Omega\times [0,T]\to E$ be a progressively measurable process s.t. $\int_0^t\left\vert B(t)\right\vert\ud t<\infty$ as in section \ref{sec:ItoHilbert}; let then $\widetilde C:\Omega\times [0,T]\to L(U,H_1)$ be another progressively measurable process that satisfies $\int_0^T\left\VV C(t)\right\VV^2_{L(U,H_1)}\ud t<\infty$ and define $C:\Omega\times[0,T]\to L(U,E)$ as
\begin{equation*}
  C(t)u=\left(\begin{matrix}\widetilde C(t)u\\0\end{matrix}\right)\ \text{,}\ u\in U\ \text{;}
\end{equation*}
let $X^0$ be a $\sF_0$-measurable random vector with values in $H$ and set
\begin{equation}
\label{eq:SDE_Banach}
  X(t)=e^{tA}X^0+\int_0^te^{(t-s)A}B(s)\ud s+\int_0^te^{(t-s)A}C(s)\ud W(s)\ \text{.}
\end{equation}
Finally set
 \begin{gather*}
   \widetilde E=\overline{D\left(A\right)}^E\ ,
   \widetilde D=A^{-1}(E)\ .
 \end{gather*}

Notice that $\widetilde D\subset D(A)\subset \widetilde E$. In most examples the set $\widetilde D$ is not dense in $E$. 
As in section \ref{sec:ItoHilbert} we assume here that $e^{tA}$ is strongly continuous in $\widetilde E$ (and this in turn implies that $\widetilde D$ is dense in $\widetilde E$), $X(t)$ actually belongs to $\widetilde E$ and that almost surely the set $\left\{X(t)\right\}_{t\in[0,T]}$ is relatively compact in $E$.\\
\begin{example}
  In the path-dependent case, we will have
    \begin{equation*}
D\left(A\right)=\left\{\xphi{x_1}{x_2}\in H \colon x_2\in W^{1,2}\left(-T,0;\bR^d\right),\ x_1=\lim_{s\to 0^-}x_2(s)\right\}\ \text{,}
\end{equation*}
\begin{equation*}
A=\left(
\begin{array}
[c]{cc}%
0 & 0\\
0 & \frac{d}{dr}%
\end{array}
\right)\ \text{,}
\end{equation*}
 \begin{equation*}
 \widetilde E=\left\{\xphi{x_1}{x_2}\in E \colon x_1=\lim_{s\to 0^-}x_2(s)\right\}\ ,
 \end{equation*}
\begin{equation*}
\widetilde D=\left\{\xphi{x_1}{x_2}\in E \colon x_2\in C^1\left([-T,0);\bR^d\right),\ x_1=\lim_{s\to 0^-}x_2(s)\right\}\ \text{.}\\
\end{equation*}
\end{example}
Finally consider a sequence $\sJ_n$ of linear continuous operators, $\sJ_n:H\to E$ with the properties:
\begin{enumerate}[label=\emph{(\roman*)}]
\item $\sJ_nx\in \widetilde D$ for every $x\in\widetilde E$;
\item $\sJ_nx\to x$ in the topology of $E$ for every $x\in E$;
\item $\sJ_n$ commutes with $A$ on $D(A)$.
\end{enumerate}
By Banach-Steinhaus and Ascoli-Arzelà theorems it follows that the operators $\sJ_n$ are equibounded and converge to the identity uniformly on compact sets of $E$.
\begin{theorem}
\label{thm:Ito_Banach}
  Assume there exists a sequence $\sJ_n$ as above and let $F\in C\left([0,T]\times E;\bR\right)$ be twice differentiable with respect to its second variable with $DF\in C\left([0,T]\times E;E^\ast\right)$ and $D^2F\in C\left([0,T]\times E;L\left(E;E^\ast\right)\right)$. Assume the time derivative $\frac{\partial F}{\partial t}(t,x)$ exists for $(t,x)\in\sT\times \widetilde D$ where $\sT\subset [0,T]$ has Lebesgue measure $\lambda\left(\sT\right)=T$ and does not depend on $x$. If there exists a continuous function $G:[0,T]\times \widetilde E\to \bR$ such that
  \begin{equation}
    \label{eq:G_banach}
    G(t,x)=\frac{\partial F}{\partial t}(t,x)+\langle Ax,DF(t,x)\rangle\quad\forall\ x\in \widetilde D\text{,}\;\forall\ t\in\sT\text{,}
  \end{equation}
then, in probability,
\begin{align*}
  F\left(t,X(t)\right)&=F\left(0,X^0\right)+\int_0^tG\left(s,X(s)\right)\ud s\\
  &+\int_0^t\left(\langle B(s),DF\left(s,X(s)\right)\rangle+\frac{1}{2}\tr_{H_1}\left[C(s)QC(s)^\ast D^2F\left(s,X(s)\right)\right]\right)\ud s\\
  &+\int_0^t\langle DF\left(s,X(s)\right),C(s)\ud W(s)\rangle\ \text{,}
\end{align*}
where $\tr_{H_1}$ is defined for $T\in L\left(E,E\right)$ as
\begin{equation*}
  \tr_{H_1}T=\sum_{j}\left\langle T\xphi{h_j}{0},\xphi{h_j}{0}\right\rangle\ \text{,}
\end{equation*}
$\left\{h_j\right\}$ being an orthonormal complete system in $H_1$.

\end{theorem}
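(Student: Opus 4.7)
The strategy is to mirror the proof of Theorem \ref{thm:Ito_Hilbert}, with the operators $\sJ_n$ playing the role of the Yosida approximations: regularize simultaneously in space via $\sJ_n$ and in time via a mollifier $\rho_\epsilon$, apply the smooth It\^o formula of Proposition \ref{prop:ItoFrancesco} in $H$ to the regularization, and then pass to the limit exploiting the $E$-compactness of the path of $X$. Concretely, after extending $F(\cdot,y)$ outside $[0,T]$ as in the Hilbert proof, I would set
\begin{equation*}
\Fen(t,x):=\left(\rho_\epsilon\ast F(\cdot,\sJ_n x)\right)(t),\qquad x\in H,\ t\in[0,T],
\end{equation*}
together with $X_n(t):=\sJ_n X(t)$, $B_n(s):=\sJ_n B(s)$, $C_n(s):=\sJ_n C(s)$. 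Since $\sJ_n\colon H\to E$ is continuous linear and $F\in C^{0,2}([0,T]\times E;\bR)$, the composition $F(\cdot,\sJ_n\,\cdot)$ is $C^{0,2}$ on $[0,T]\times H$ and $\Fen\in C^{1,2}([0,T]\times H;\bR)$, with $D\Fen(t,x)=\sJ_n^\ast(\rho_\epsilon\ast DF(\cdot,\sJ_n x))(t)$ and analogously for $D^2\Fen$.

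Because $X(t)\in\widetilde E$, property (i) gives $X_n(t)\in\widetilde D\subset D(A)$, and the commutation of $\sJ_n$ with $A$ on $D(A)$ propagates to commutation with $e^{tA}$; hence $X_n$ satisfies the strong equation
\begin{equation*}
X_n(t)=\sJ_n X^0+\int_0^t\left[AX_n(s)+B_n(s)\right]\ud s+\int_0^t C_n(s)\ud W(s),
\end{equation*}
with $|AX_n(\cdot)|_H$ integrable by boundedness of $A\sJ_n\in L(H,H)$. Proposition \ref{prop:ItoFrancesco} then yields a pre-limit It\^o identity in $H$ for $\Fen(t,X_n(t))$; evaluating the $H$-trace on any orthonormal basis of the form $\{(h_j,0)\}_j\cup\{(0,k_l)\}_l$ reduces it precisely to the $\tr_{H_1}$-expression appearing in the statement, since the range of $C(s)$ lies in $H_1\times\{0\}$.

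The crux is the cancellation identity, valid on $\sT\times\widetilde D$:
\begin{equation*}
\tfrac{\partial\Fen}{\partial t}(t,x)+\langle Ax,D\Fen(t,x)\rangle=\left(\rho_\epsilon\ast G(\cdot,\sJ_n x)\right)(t),
\end{equation*}
where the first summand is handled as in the Hilbert proof (with $\sJ_n x\in\widetilde D$ ensuring existence of $\partial_t F$), and the second uses $\langle Ax,\sJ_n^\ast\cdot\rangle=\langle\sJ_n Ax,\cdot\rangle=\langle A\sJ_n x,\cdot\rangle$ by property (iii). Evaluating at $x=X_n(s)\in\widetilde D$ produces $(\rho_\epsilon\ast G(\cdot,\sJ_n^2 X(s)))(s)$. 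Since $\{X(s)\}_{s\in[0,t]}$ is relatively compact in $E$ and contained in $\widetilde E$ (closed in $E$), $\sJ_n^2 X(\cdot)$ converges uniformly to $X(\cdot)$ in $\widetilde E$; by continuity of $G$ on $[0,T]\times\widetilde E$ the integrand tends uniformly to $G(s,X(s))$.

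The remaining terms (initial value, pathwise value, drift integral, trace, It\^o integral) are dispatched exactly as in the proof of Theorem \ref{thm:Ito_Hilbert}: equiboundedness of $\sJ_n$, uniform continuity of $DF$ and $D^2F$ on the compact set $[0,T]\times(\{X(s)\}_s\cup\{\sJ_n^2 X(s)\}_{n,s})\subset[0,T]\times E$, and dominated convergence for the pathwise integrals; convergence in probability of the It\^o integral follows from the usual $L^2$-isometry argument applied to $C(s)^\ast(\sJ_n^\ast)^2\rho_\epsilon\ast DF(\cdot,\sJ_n^2 X(s))(s)-C(s)^\ast DF(s,X(s))$. The expected main obstacle is precisely the $G$-term: each of $A\sJ_n^2X(s)\in E$ and $DF(s,\sJ_n^2X(s))\in E^\ast$ may diverge individually as $n\to\infty$ (since $X(s)$ need not lie in $D(A)$ in the limit), and one has to rely entirely on the assumed continuous extension $G$ to $\widetilde E$ together with the genuinely $E$-valued compactness of the path (stronger than $H$-compactness) to absorb these potential divergences inside their sum.
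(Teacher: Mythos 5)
Your proposal is correct in substance but takes a genuinely different route from the paper. The paper does \emph{not} redo the double regularization: it sets $F_n(t,x):=F\left(t,\sJ_nx\right)$ and $G_n(t,x):=G\left(t,\sJ_nx\right)$, checks via properties (i) and (iii) that $G_n$ continuously extends $\frac{\partial F_n}{\partial t}+\langle A\cdot,DF_n\rangle$ from $D(A)$ to $\widetilde E$, and then applies Theorem \ref{thm:Ito_Hilbert} to $F_n$, $G_n$ and the \emph{original} process $X$ (the time mollification and the Yosida approximation are already encapsulated in that theorem), so that only the single limit $n\to\infty$ remains, with the product structure of $E^\ast$ used only to identify the trace as $\tr_{H_1}$ and to handle the stochastic integral. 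You instead regularize the process itself, $X_n=\sJ_nX$, and reapply Proposition \ref{prop:ItoFrancesco} to $\Fen$; this is self-contained and closely parallels the Hilbert proof, but it costs extra verifications the paper avoids: that $\sJ_n$ commutes with $e^{tA}$ (true, via the resolvent, from (iii)), that $X_n$ satisfies the strong equation with $AX_n(\cdot)$ integrable, and the mollified cancellation identity, which you state and use correctly at the points $X_n(s)\in\widetilde D$.

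Two of your justifications need repair, though both are repairable. First, $A\sJ_n\notin L(H,H)$ in general: property (i) only gives $\sJ_n(\widetilde E)\subset\widetilde D$, so $\sJ_nx$ need not lie in $D(A)$ for $x\in H$; what you actually need, and what holds by the closed graph theorem, is boundedness of $A\sJ_n$ from $\widetilde E$ (with the $E$-norm) to $H$, which together with the relative compactness of $\left\{X(s)\right\}_s$ in $E$ yields the integrability of $AX_n$. Second, your pre-limit trace involves $C_n(s)=\sJ_nC(s)$, whose range lies in $\sJ_n\left(H_1\times\{0\}\right)$, not in $H_1\times\{0\}$, so the reduction to $\tr_{H_1}$ on the basis $\left\{\xphi{h_j}{0}\right\}\cup\left\{\xphi{0}{k_l}\right\}$ is not immediate as you claim; you must first use cyclicity of the trace to rewrite it as $\tr\left[C(s)QC(s)^\ast\left(\sJ_n^\ast\right)^2\left(\rho_\epsilon\ast D^2F\left(\cdot,\sJ_n^2X(s)\right)\right)(s)\,\sJ_n^2\right]$, whose range does lie in $H_1\times\{0\}$, and only then drop the $H_2$-part of the basis, exactly as in the paper's identity \call{eq:trH_1}. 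With these two points fixed, your argument goes through at the same level of rigor as the paper's.
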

\begin{proof}
  Set $F_n:[0,T]\times H\to \bR$, $F_n(t,x):=F\left(t,\sJ_nx\right)$. Thanks to the assumptions on $F$ we have that $F_n$ is twice differentiable with respect to the variable $x$ and 
  \begin{gather}
    \label{eq:DFn}DF_n(t,x)=\sJ_n^\ast DF\left(t,\sJ_nx\right)\in L\left(H;\bR\right)\cong H\\
    \label{eq:D2Fn}D^2F_n(t,x)=\sJ_n^\ast D^2F\left(t,\sJ_nx\right)\sJ_n\in L\left(H;H\right).
  \end{gather}
Furthermore for any $t\in\sT$ the derivative of $F_n$ with respect to $t$ is defined for all $x\in H$ and equals
\begin{equation}
    \label{eq:dtF}\frac{\partial F_n}{\partial t}(t,x)=\frac{\partial F}{\partial t}\left(t,\sJ_nx\right)\ \text{.}
  \end{equation}
Set $G_n:[0,T]\times \widetilde E\to\bR$, $G_n\left(t,x\right):=G\left(t,\sJ_nx\right)$. $G_n$ is obviously continuous; we check now that for any $t\in\sT$ $G_n(t,\cdot)$ extends $\frac{\partial F_n}{\partial t}(t,\cdot)+\langle A\cdot,DF_n(t,\cdot)\rangle$ from $D\left(A\right)$ to $\widetilde E$. Since $\sJ_n$ maps $\widetilde E$ into $\widetilde D\subset D\left(A\right)\subset H$ we have 
\begin{align*}
  G_n\left(t,x\right)&=G\left(t,\sJ_nx\right)\\
                    &=\frac{\partial F}{\partial t}\left(t,\sJ_nx\right)+\langle A\sJ_nx,DF\left(t,\sJ_nx\right)\rangle\ \text{;}\\
\intertext{if we choose $x\in D\left(A\right)$, $\sJ_n$ commutes with $A$ so that we can proceed to get}
                    &=\frac{\partial F}{\partial t}\left(t,\sJ_nx\right)+\langle \sJ_nAx,DF\left(t,\sJ_nx\right)\rangle\\
                    &=\frac{\partial F_n}{\partial t}(t,x)+\langle Ax,DF_n(t,x)\rangle\ \text{.}
\end{align*}
Notice that here only the term $\langle Ax,DF_n(t,x)\rangle$ has to be extended (since it is not well defined outside $D\left(A\right)$) while the time derivative of $F_n$ makes sense on the whole space $H$ by definition.\\
We can now apply theorem \ref{thm:Ito_Hilbert} to $F_n$ and $G_n$, obtaining that for each $n$
\begin{align*}
  F_n\left(t,X(t)\right)&=F_n\left(0,X^0\right)+\int_0^tG_n\left(s,X(s)\right)\ud s\\
  &+\int_0^t\left[\langle B(s),DF_n\left(s,X(s)\right)\rangle+\frac{1}{2}\tr\left[C(s)QC(s)^\ast D^2F_n\left(s,X(s)\right)\right]\right]\ud s\\
    &+\int_0^t\langle DF_n\left(s,X(s)\right),C(s)\ud W(s)\rangle\ \text{.}
\end{align*}
Here $C(s)QC(s)^\ast$ maps $E^\ast$ into $E$, therefore $C(s)QC(s)^\ast D^2F_n\left(s,X(s)\right)$ maps $H$ into $E\subset H$ and the trace term can be interpreted as in $H$. Also, since $C(s)$ belongs to $L\left(U;H_1\times\{0\}\right)$, we have that the stochastic integral above is well defined as a stochastic integral in a Hilbert space.\\
Substituting the definition of $F_n$ and identities \call{eq:DFn}, \call{eq:D2Fn} in the previous equation we get
\begin{align*}
  F&\left(t,\sJ_nX(t)\right)=F\left(0,\sJ_nX^0\right)+\int_0^t G\left(s,\sJ_nX(s)\right)\ud s\\
  &+\int_0^t\left[\langle \sJ_nB(s),DF\left(s,\sJ_nX(s)\right)\rangle+\frac{1}{2}\tr\left[C(s)QC(s)^\ast\sJ_n^\ast D^2F\left(s,\sJ_nX(s)\right)\sJ_n\right]\right]\ud s\\
  &+\int_0^t\langle DF\left(s,\sJ_nX(s)\right),\sJ_nC(s)\ud W(s)\rangle\ \text{.}
\end{align*}
Now we fix $(\omega,t)$ and study the convergence of each of the terms above. Since $X(\omega,t)\in \widetilde E$, $\sJ_nX(\omega,t)\to X(\omega,t)$ almost surely as $n\to\infty$ and therefore by continuity of $F$ we have that $F\left(t,\sJ_nX(\omega,t)\right)$ converges to $F\left(t,X(\omega,t)\right)$ almost surely. For the same reasons $F\left(0,\sJ_nX^0(\omega)\right)$ converges to $F\left(0,X^0(\omega)\right)$ almost surely.\\
Denote by $\Omega_1$ the set of full probability where each of the trajectories $\left\{X(\omega,t)\right\}_t$ is relatively compact. Arguing as in the proof of theorem \ref{thm:Ito_Hilbert} it can be shown that, thanks to the uniform convergence on compact sets of the $\sJ_n$, the set $\left\{\sJ_nX(\omega,t)\right\}_{n,t}$ is totally bounded in $E$ for any $\omega\in\Omega_1$. Therefore the a.s. convergence of the terms $\int_0^tG\left(s,\sJ_nX(\omega,s)\right)\ud s$ and $\int_0^t\langle\sJ_n B(\omega,s),DF\left(s,\sJ_nX(\omega,s)\right)\rangle\ud s$ follows from the dominated convergence theorem since $G$ and $DF$ are continuous, $B$ is integrable and the family $\left\{\sJ_n\right\}$ is equibounded.\\
To show the convergence of the stochastic integral term consider
\begin{multline}
\label{eq:stochInt}  \int_0^t\left\VV C(s)^\ast\sJ_n^\ast DF\left(s,\sJ_nX(s)\right)-C(s)^\ast DF\left(s,X(s)\right)\right\VV_U^2\ud s\\ \leq \int_0^t\left\VV C(s)\right\VV_{L(U,E)}^2\ \left\VV \sJ_n^\ast DF\left(s,\sJ_nX(s)\right)-DF\left(s,X(s)\right)\right\VV_{E^\ast}^2\ud s\ .
\end{multline}
Now
\begin{align*}
  \big\VV&\sJ_n^\ast DF\left(s,\sJ_nX(s)\right)-DF\left(s,X(s)\right)\big\VV_{E^\ast}=\sup_{\substack{e\in E\\ \VV e\VV=1}}\left\vert\langle e,\sJ_n^\ast DF\left(s,\sJ_nX(s)\right)-DF\left(s,X(s)\right)\rangle\right\vert\\
  &=\sup_{\substack{e\in E\\ \VV e\VV =1}}\left\vert\langle \sJ_n e,DF\left(s,\sJ_n X(s)\right)\rangle-\langle e,DF\left(s,X(s)\right)\rangle\right\vert\\
  &\leq\sup_{\substack{e\in E\\ \VV e\VV =1}}\left[\left\vert\langle\sJ_n e,DF\left(s,\sJ_n X(s)\right)\rangle-\langle \sJ_n e,DF\left(s,X(s)\right)\rangle\right\vert+\left\vert\langle \sJ_n e-e,DF\left(s,X(s)\right)\rangle\right\vert\right]\\
  &\leq\sup_{\substack{e\in E\\ \VV e\VV =1}}\left[\left\VV\sJ_n\right\VV_E\ \left\VV DF\left(s,\sJ_nX(s)\right)-DF\left(s,X(s)\right)\right\VV_{E^\ast}+\left\VV\sJ_ne-e\right\VV_{E}\ \left\VV DF\left(s,X(s)\right)\right\VV_{E^\ast}\right]
\end{align*}
and this last quantity converges to zero as before, since $\left\{\sJ_n\right\}$ is equibounded, $DF$ is continuous (hence uniformly continuous on $\left\{\sJ_nX(s)\right\}_{n,s}\cup\left\{X(s)\right\}_s$) and $\sJ_n$ converges to the identity on $E$. Since $\left\VV C(s)\right\VV^2$ is integrable, we can apply again the dominated convergence theorem in \call{eq:stochInt} to get that the left hand side converges to $0$ almost surely, hence
\begin{equation*}
  \int_0^t\langle DF\left(s,\sJ_nX(s)\right),\sJ_nC(s)\ud W(s)\rangle\to\int_0^t\langle DF\left(s,X(s)\right),C(s)\ud W(s)\rangle
\end{equation*}
in probability.\\
It remains to study the trace term. First notice that, since $E^\ast=\left( H_1\times E_2\right)^\ast\cong H_1^\ast\times E_2^\ast\cong H_1\times E_2^\ast$, every $f\in E^\ast$ can be written as a couple $\left(f_1,f_2\right)\in H_1\times E_2^\ast$ and therefore for any $u\in U$ and $f\in E^\ast$
\begin{align*}
  \tensor[_E]{\langle C(s)u,f\rangle}{_{E^\ast}}&=\tensor[_E]{\left\langle\xphi{\widetilde C(s)u}{0},\xphi{f_1}{f_2}\right\rangle}{_{E^\ast}}\\
  &={\langle \widetilde C(s)u,f_1\rangle}_1=\tensor[_U]{\langle u,\widetilde C(s)^\ast f_1\rangle}{_U}\ \text{;}
\end{align*}
hence $C(s)^\ast f=\widetilde C(s)^\ast f_1$ for any $f\in E^\ast$.\\
Now let $\sH_1$ and $\sH_2$ be complete orthonormal systems of $H_1$ and $H_2$, respectively, and set $\rH_1:=\sH_1\times\{0\}$, $\rH_2:=\sH_2\times\{0\}$, so that $\rH:=\rH_1\cup\rH_2$ is a complete orthonormal system for $H$. $\rH$ is countable since $H_1$ and $H_2$ are separable. For $h\in\rH$ we have that
\begin{equation*}
  y:=\sJ_n^\ast D^2F\left(s,\sJ_nX(s)\right)\sJ_nh\in H\subset E^\ast=H_1\times E_2^\ast,
\end{equation*}
so that, writing $y=\left(y_1,y_2\right)$, we have
\begin{equation*}
  C(s)QC(s)^\ast y=C(s)Q\widetilde C(s)^\ast y_1=\left(\begin{matrix}\widetilde C(s)Q\widetilde C(s)^\ast y_1\\ 0\end{matrix}\right)\in H_1\times\{0\}\subset E\subset H.
\end{equation*}
Therefore
\begin{equation*}
  \left\langle C(s)QC(s)^\ast\sJ_n^\ast D^2F\left(s,\sJ_nX(s)\right)\sJ_n h,h\right\rangle=\left\langle\left(\begin{matrix}\widetilde C(s)Q\widetilde C(s)^\ast y_1\\0\end{matrix}\right),h\right\rangle
\end{equation*}
and this last quantity can be different from $0$ only if $h\in\rH_1$. This implies
\begin{align}
  \nonumber \tr\big[C(s)QC(s)^\ast\sJ_n^\ast D^2F&\left(s,\sJ_nX(s)\right)\sJ_n\big]=\sum_{h\in\rH}\left\langle C(s)QC(s)^\ast\sJ_n^\ast D^2F\left(s,\sJ_nX(s)\right)\sJ_n h,h\right\rangle\\
  \nonumber &=\sum_{h\in\rH_1}{\left\langle C(s)QC(s)^\ast\sJ_n^\ast D^2F\left(s,\sJ_nX(s)\right)\sJ_n h,h\right\rangle}_1\\
  \label{eq:trH_1}&=\tr_{H_1}\left[C(s)QC(s)^\ast\sJ_n^\ast D^2F\left(s,\sJ_nX(s)\right)\sJ_n\right]\ \text{.}
\end{align}
Now, setting $\widetilde K:=\sup_n\left\Vert\sJ_n\right\Vert$ we have that for $h\in\rH_1$
\begin{align*}
    \Big\vert\tr_{H_1}&\left[C(s)QC(s)^\ast\sJ_n^\ast D^2F\left(s,\sJ_nX(s)\right)\sJ_n\right]-\tr_{H_1}\left[C(s)QC(s)^\ast D^2F\left(s,X(s)\right)\right]\Big\vert\\
    &=\left\vert\sum_{h\in\rH_1}\left\langle D^2f\left(t,\sJ_nX(s)\right)\sJ_nh,\sJ_nC(s)QC(s)^\ast h\right\rangle-\sum_{h\in\rH_1}\left\langle D^2F\left(t,X(s)\right)h,C(s)QC(s)^\ast h\right\rangle\right\vert\\
    &\leq\sum_{h\in\rH_1}\left\vert\left\langle D^2F\left(t,\sJ_nX(s)\right)\sJ_nh,\sJ_nC(s)QC(s)^\ast h-C(s)QC(s)^\ast h\right\rangle\right\vert\\
    &\phantom{\leq}+\sum_{h\in\rH_1}\left\vert\left\langle D^2F\left(t,\sJ_nX(s)\right)\sJ_nh-D^2F\left(t,X(s)\right)h,C(s)QC(s)^\ast h\right\rangle\right\vert\\
    &\leq \widetilde K\left\Vert D^2F\left(t,\sJ_nX(s)\right)\right\Vert\sum_{h\in\rH_1}\left\vert\sJ_nC(s)QC(s)^\ast h-C(s)QC(s)^\ast h\right\vert\\
    &\phantom{\leq}+\left\Vert C(s)\right\Vert_{L(U,E)}^2\ \left\Vert Q\right\Vert_{L(U,U)}^2\sum_{h\in\rH_1}\Big[\widetilde K\left\Vert D^2F\left(t,\sJ_nX(s)\right)-D^2F\left(t,X(s)\right)\right\Vert\\
      &\phantom{\leq +}+\left\Vert D^2F\left(t,X(s)\right)\right\Vert\ \left\vert\sJ_nh-h\right\vert\Big];
\end{align*}
therefore thanks to the equiboundedness of $\left\{\sJ_n\right\}$ and the uniform continuity of $D^2F$ on the set $\left\{\sJ_nX(s)\right\}_{n,s}\cup\left\{X(s)\right\}_s$ we can apply the dominated convergence theorem to the sum over $h\in\rH_1$ to obtain that
\begin{equation*}
  \tr_{H_1}\left[C(s)QC(s)^\ast\sJ_n^\ast D^2F\left(s,\sJ_nX(s)\right)\sJ_n\right]\overset{n\to\infty}{\longrightarrow}\tr_{H_1}\left[C(s)QC(s)^\ast D^2F\left(s,X(s)\right)\right]\ .
\end{equation*}
Since $D^2F$ is bounded also in $s\in[0,T]$ and $\left\VV C(s)\right\VV_{L(U;E)}^2$ is integrable by assumption, a second application of the dominated convergence theorem yields that for every $t\in[0,T]$
\begin{equation*}
  \int_0^t\tr_{H_1}\left[C(s)QC(s)^\ast\sJ_n^\ast D^2F\left(s,\sJ_nX(s)\right)\sJ_n\right]\ud s\overset{n\to\infty}{\longrightarrow}\int_0^t\tr_{H_1}\left[C(s)QC(s)^\ast D^2F\left(s,X(s)\right)\right]\ud s,
\end{equation*}
thus concluding the proof.
\end{proof}
\begin{remark}
\label{rem:Etilde}
The use of both spaces $E$ and $\widetilde E$ in the statement of the theorem can seem unjustified at first sight: since the process $X$ is supposed to live in $\widetilde E$ and the result is a It\^o formula valid on $\widetilde E$ (because the extension $G$ is defined on $\widetilde E$ only), everything could apparently be formulated in $\widetilde E$. However in most examples the space $\widetilde E$ is not a product space (see section \ref{sec:infinite_reformulation}) hence neither is its dual space, and the product structure of the dual is needed to show that the second order term is concentrated only on the $H_1$-component. Since asking $F$ to be defined on $[0,T]\times H$ will leave out many interesting examples (we typically want to endow $\widetilde E$ with a topology stronger that the one of $H$), the choice to use the intermediate space $E$ seems to be the more adequate.
\end{remark}
\begin{corollary}
\label{coro:piecewise}
  Consider $n+1$ points $0=t_0\leq t_1\leq\dots\leq t_n= T$ and assume that $F\in C\left([t_j,t_{j+1})\times E;\bR\right)$ for $j=0,1,\dots,n-1$. Suppose moreover that
  \begin{enumerate}
  \item the map $y\mapsto F(t,y)$ is twice differentiable for every $t\in[0,T]$;
  \item $DF\in C\left(\left[t_j,t_{j+1}\right)\times E;E^\ast\right)$ for $j=0,1,\dots,n-1$;
  \item $D^2F\in C\left(\left[t_j,t_{j+1}\right)\times E;L\left(E,E^\ast\right)\right)$ for $j=0,1,\dots,n-1$;
  \item the map $t\mapsto F(t,y)$ is càdlàg for every $y\in E$;
  \item $\frac{\partial F}{\partial t}$ exists for $(t,x)\in \sT\times\widetilde D$ where $\sT$ is as in Theorem \ref{thm:Ito_Banach};
  \item there exists a function $G$ such that $G\in C\left(\left[t_j,t_{j+1}\right)\times\widetilde E;\bR\right)$ for all $j$ and such that
    \begin{equation*}
      G(t,x)=\frac{\partial F}{\partial t}(t,x)+\langle Ax,DF(t,x)\rangle\quad \forall x\in\widetilde D,\ \forall t\in\sT\cap\left[t_j,t_{j+1}\right)\ .
    \end{equation*}
  \end{enumerate}
Then the formula
\begin{align*}
  F\left(T,X(T)\right)&=F\left(0,X^0\right)+\sum_{j=1}^n\left[F\left(t_j,X\left(t_j\right)\right)-F\left(t_j-,X\left(t_j\right)\right)\right]\\
  &+\int_0^TG\left(s,X(s)\right)\ud s+\int_0^T\langle DF\left(s,X(s)\right),C(s)\ud W(s)\rangle\\
  &+\int_0^T\left(\langle B(s),DF\left(s,X(s)\right)\rangle+\frac{1}{2}\tr_{H_1}\left[C(s)QC(s)^\ast D^2F\left(s,X(s)\right)\right]\right)\ud s
\end{align*}
holds.
\end{corollary}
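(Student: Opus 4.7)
The plan is to apply Theorem \ref{thm:Ito_Banach} separately on each subinterval $[t_j, t_{j+1} - \delta]$ for small $\delta > 0$, send $\delta \to 0^+$ to obtain an It\^o-type identity on each half-open piece $[t_j, t_{j+1})$, and then telescope the $n$ resulting identities, accounting for the jumps of $F$ at the partition points through the c\`adl\`ag property.

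On $[t_j, t_{j+1} - \delta]$ the restrictions of $F$, $DF$, $D^2F$ and $G$ are continuous by hypotheses (1)--(6), so they satisfy all assumptions of Theorem \ref{thm:Ito_Banach} on this closed subinterval (a time shift trivially adapts that theorem to start from $t_j$ instead of $0$). This gives
\begin{align*}
F\left(t_{j+1}-\delta,X(t_{j+1}-\delta)\right) &= F\left(t_j,X(t_j)\right)+\int_{t_j}^{t_{j+1}-\delta}G\left(s,X(s)\right)\ud s\\
&\quad +\int_{t_j}^{t_{j+1}-\delta}\left[\langle B(s),DF\left(s,X(s)\right)\rangle+\tfrac{1}{2}\tr_{H_1}\left[C(s)QC(s)^\ast D^2F\left(s,X(s)\right)\right]\right]\ud s\\
&\quad +\int_{t_j}^{t_{j+1}-\delta}\langle DF\left(s,X(s)\right),C(s)\ud W(s)\rangle.
\end{align*}
Letting $\delta\to 0^+$, the left-hand side converges to $F(t_{j+1}-, X(t_{j+1}))$ thanks to condition (4) combined with the continuity of the path $X$ and the joint continuity of $F$ on $[t_j,t_{j+1})\times E$. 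The deterministic integrals converge by dominated convergence, while the stochastic integral converges in probability via the standard It\^o-isometry argument, using the boundedness of $DF(s,X(s))$ on the compact image of the path.

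Summing the resulting identities over $j=0,1,\dots,n-1$ and telescoping produces
\begin{equation*}
\sum_{j=0}^{n-1}\left[F\left(t_{j+1}-,X(t_{j+1})\right)-F\left(t_j,X(t_j)\right)\right]
=F\left(T-,X(T)\right)-F\left(0,X^0\right)-\sum_{j=1}^{n-1}\left[F\left(t_j,X(t_j)\right)-F\left(t_j-,X(t_j)\right)\right],
\end{equation*}
and the right-hand sides concatenate into the single integrals from $0$ to $T$ appearing in the statement. Adding the identity $F(T,X(T))-F(T-,X(T))=F(T,X(T))-F(T-,X(T))$ to both sides shifts the jump sum to run from $j=1$ up to $j=n$, yielding exactly the formula of the corollary.

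The main obstacle I expect is making the passage $\delta\to 0^+$ rigorous, since conditions (2)--(3) only place $DF$ and $D^2F$ in $C([t_j,t_{j+1})\times E;\dots)$ and not up to $t_{j+1}$. The remedy is to exploit the relative compactness of $\{X(s)\}_{s\in[0,T]}$ together with the c\`adl\`ag structure of $F$ and its derivatives (implicit in the hypotheses through the assumption that the relevant quantities are well-defined at $t_j-$) to obtain the local boundedness near each $t_{j+1}$ needed to apply dominated convergence in the deterministic integrals and to ensure $L^2$-convergence of the integrand of the It\^o integral. Once this uniform control is in place, the remaining steps are essentially algebraic bookkeeping.
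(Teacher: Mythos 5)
Your proposal is correct and follows essentially the same route as the paper: apply Theorem \ref{thm:Ito_Banach} on each subinterval $[t_j,t_{j+1}-\epsilon]$, let $\epsilon\to 0^+$, and sum the resulting identities, with the c\`adl\`ag property of $t\mapsto F(t,y)$ producing the jump terms $F(t_j,X(t_j))-F(t_j-,X(t_j))$. The only (immaterial) difference is that you pass to the limit on each piece before summing, whereas the paper sums the $n$ identities with a common $\epsilon$ and then takes the limit; your additional remarks on justifying the limit are consistent with, and indeed more detailed than, the paper's argument.
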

\begin{proof}
Thanks to the assumptions, theorem \ref{thm:Ito_Banach} can be applied to obtain $n$ identities for the increments $F\big(t_{j+1}-\epsilon,X\left(t_{j+1}-\epsilon\right)\big)-F\big(t_j,X\left(t_j\right)\big)$, $j=0,\dots,n-1$, with $0<\epsilon<\min_j\left(t_{j+1}-t_j\right)$. Summing up these identities and taking the limit as $\epsilon$ goes to $0$ yields the result.
\end{proof}
\section{Application to generators of groups}

In a Hilbert space $H$, given a Wiener process $\left(  W\left(  t\right)
\right)  _{t\geq0}$ with covariance $Q$, defined on a filtered probability
space $\left(  \Omega,\mathcal{F},\left(  \mathcal{F}_{t}\right)  _{t\geq
0},P\right)  $, given $x^{0}\in H$, $B:\Omega\times\left[  0,T\right]
\rightarrow H$ progressively measurable and integrable in $t$, $P$-a.s.,
$C:\Omega\times\left[  0,T\right]  \rightarrow L\left(  H,H\right)  $
progressively measurable and square integrable in $t$, $P$-a.s., let $X\left(
t\right)  $ be the stochastic process given by the mild formula%
\[
X\left(  t\right)  =e^{tA}x^{0}+\int_{0}^{t}e^{\left(  t-s\right)  A}B\left(
s\right)  ds+\int_{0}^{t}e^{\left(  t-s\right)  A}C\left(  s\right)  dW\left(
s\right)
\],
where $e^{tA}$ is a strongly continuous \textit{group}. In this particular
case we can also write%
\[
X\left(  t\right)  =e^{tA}\left(  x^{0}+\int_{0}^{t}e^{-sA}B\left(  s\right)
ds+\int_{0}^{t}e^{-sA}C\left(  s\right)  dW\left(  s\right)  \right)
\],
from which we may deduce, for instance, that $X$ is a continuous process in
$H$. Formally%
\[
dX\left(  t\right)  =AX\left(  t\right)  dt+B\left(  t\right)  dt+C\left(
t\right)  dW\left(  t\right),
\]
but $AX\left(  t\right)  $ is generally not well defined:\ typically the
solution has the same spatial regularity of the initial condition and the
forcing terms. Thus in general, one cannot apply the classical It\^{o} formula
to $F\left(  t,X\left(  t\right)  \right)  $, due to this fact. A possibility
is given by the mild It\^{o} formula \cite{DPJR}. We show here an
alternative, which applies when suitable cancellations in $F\left(
t,x\right)  $ occur. 

As a first example, let $F\left(  t,x\right)  $ be given by
\[
F\left(  t,x\right)  =F_{0}\left(  e^{-tA}x\right)  +\int_{0}^{t}H_{0}\left(
s,e^{-\left(  t-s\right)  A}x\right)  ds,
\]
where $F_{0}\in C^{2}\left(  H;\mathds{R}\right)  $, $H_{0}\in C\left(
\left[  0,T\right]  \times H;\mathds{R}\right)  $, with continuous derivatives
$DH_{0}$, $D^{2}H_{0}$. Then $\frac{\partial F}{\partial t}\left(  t,x\right)
$ exists for all $x\in D\left(  A\right)  $, $t\in\left[  0,T\right]  $ and it
is given by%
\[
\frac{\partial F}{\partial t}\left(  t,x\right)  =-\left\langle \left(
DF_{0}\right)  \left(  e^{-tA}x\right)  ,e^{-tA}Ax\right\rangle +H_{0}\left(
t,x\right)  -\int_{0}^{t}\left\langle \left(  DH_{0}\right)  \left(
s,e^{-\left(  t-s\right)  A}x\right)  ,e^{-\left(  t-s\right)  A}%
Ax\right\rangle ds.
\]
Moreover, $DF\in C\left(  \left[  0,T\right]  \times H;H\right)  $, $D^{2}F\in
C\left(  \left[  0,T\right]  \times H;L\left(  H,H\right)  \right)  $ and
\[
\left\langle DF\left(  t,x\right)  ,h\right\rangle =\left\langle \left(
DF_{0}\right)  \left(  e^{-tA}x\right)  ,e^{-tA}h\right\rangle +\int_{0}%
^{t}\left\langle DH_{0}\left(  s,e^{-\left(  t-s\right)  A}x\right)
,e^{-\left(  t-s\right)  A}h\right\rangle ds.
\]
Therefore
\[
\frac{\partial F}{\partial t}\left(  t,x\right)  +\left\langle Ax,DF\left(
t,x\right)  \right\rangle =H_{0}\left(  t,x\right)  .
\]
Consider the function $G\left(  t,x\right)  :=\frac{\partial F}{\partial
t}\left(  t,x\right)  +\left\langle Ax,DF\left(  t,x\right)  \right\rangle $.
It is a priori well defined only on $x\in D\left(  A\right)  $. However,
being
\[
G\left(  t,x\right)  =H_{0}\left(  t,x\right),
\]
the function $G$ extends to a continuous function on $\left[  0,T\right]
\times H$. Then theorem \ref{thm:Ito_Hilbert} applies and It\^{o} formula reads
\begin{align*}
&  F\left(  t,X\left(  t\right)  \right)  =F\left(  0,x^{0}\right)  +\int%
_{0}^{t}H_{0}\left(  s,X\left(  s\right)  \right)  ds+\int_{0}^{t}\left\langle
B\left(  s\right)  ,DF\left(  s,X\left(  s\right)  \right)  \right\rangle ds\\
&  +\int_{0}^{t}\left\langle DF\left(  s,X\left(  s\right)  \right)  ,C\left(
s\right)  dW\left(  s\right)  \right\rangle +\frac{1}{2}\int_{0}^{t}Tr\left(
C\left(  s\right)  QC^{\ast}\left(  s\right)  D^{2}F\left(  s,X\left(
s\right)  \right)  \right)  ds.
\end{align*}

\subsection{Kolmogorov equation for SDEs with group generator}

The previous example concerns a very particular class of functionals $F$. As a
more useful (but very related)\ example, assume we have a solution $F\left(
t,x\right)  $ of the following Kolmogorov equation%
\begin{equation}
\frac{\partial F}{\partial t}\left(  t,x\right)  +\left\langle Ax+B\left(
t,x\right)  ,DF\left(  t,x\right)  \right\rangle +\frac{1}{2}Tr\left(
C\left(  t,x\right)  QC^{\ast}\left(  t,x\right)  D^{2}F\left(  t,X\left(
t\right)  \right)  \right)=0,   \label{Kolmog group}
\end{equation}
for $x\in D\left(  A\right)  ,t\in\left[0,T\right]$, $F\left(  T,x\right)=\phi\left(  x\right)$, with the regularity
\begin{align}
F  & \in C\left(  \left[  0,T\right]  \times H;\mathds{R}\right)  ,\qquad
DF\in C\left(  \left[  0,T\right]  \times H;H\right)  \label{class}\\
D^{2}F  & \in C\left(  \left[  0,T\right]  \times H;L\left(  H,H\right)
\right)  ,\qquad\frac{\partial F}{\partial t}\in C\left(  \left[  0,T\right]
\times D\left(  A\right)  ;\mathds{R}\right)  .\nonumber
\end{align}
Here we assume that $B:\left[  0,T\right]  \times H\rightarrow H$ and
$C:\left[  0,T\right]  \times H\rightarrow L\left(  H,H\right)  $ are
continuous (we assume continuity of $B$ and $\frac{\partial F}{\partial t}$
for simplicity of exposition, but this detail can be generalized). Since%
\[
G\left(  t,x\right)  :=\frac{\partial F}{\partial t}\left(  t,x\right)
+\left\langle Ax+B\left(  t,x\right)  ,DF\left(  t,x\right)  \right\rangle
,\qquad x\in D\left(  A\right)  ,t\in\left[  0,T\right],
\]
satisfies
\[
G\left(  t,x\right)  =-\frac{1}{2}Tr\left(  C\left(  t,x\right)  QC^{\ast
}\left(  t,x\right)  D^{2}F\left(  t,X\left(  t\right)  \right)  \right),
\]
then it has a continuous extension on $\left[  0,T\right]  \times H$ and
theorem \ref{thm:Ito_Hilbert} is
applicable, if $\left(  X\left(  t\right)  \right)  _{t\in\left[
t_{0},T\right]  }$ (for some $t_{0}\in\lbrack0,T)$) is a continuous process in
$H$ satisfying%
\begin{equation}
X\left(  t\right)  =e^{\left(  t-t_{0}\right)  A}x^{0}+\int_{t_{0}}%
^{t}e^{\left(  t-s\right)  A}B\left(  s,X\left(  s\right)  \right)
ds+\int_{t_{0}}^{t}e^{\left(  t-s\right)  A}C\left(  s,X\left(  s\right)
\right)  dW\left(  s\right)  \ \text{,}\label{SDE group}%
\end{equation}
thus we get%
\begin{equation}
\label{eq:kolmgroup1}
F\left(  t,X\left(  t\right)  \right)  =F\left(  t_{0},x^{0}\right)
+\int_{t_{0}}^{t}\left\langle DF\left(  s,X\left(  s\right)  \right)
,C\left(  s,X\left(  s\right)  \right)  dW\left(  s\right)  \right\rangle \ .
\end{equation}
We can now easily prove the following uniqueness result. We do not repeat the assumptions on $H$, $W$, $e^{tA}$, $B$.

\begin{theorem}
Assume that for every $\left(  t_{0},x^{0}\right)  \in\left[  0,T\right]
\times H$, there exists at least one continuous process $X$ in $H$ satisfying
equation (\ref{SDE group}). Then the following holds.

i) The Kolmogorov equation (\ref{Kolmog group}) has a unique solution in the
class of bounded functions $F$ satisfying (\ref{class}).

ii)\ If $C\in C_{b}\left(  \left[  0,T\right]  \times H;L\left(  H,H\right)
\right)  $, it has a unique solution in the class of functions $F$ satisfying
(\ref{class}) and $\left\Vert DF\right\Vert _{\infty}<\infty$. 
\end{theorem}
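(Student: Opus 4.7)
The plan is to derive uniqueness from the probabilistic representation \call{eq:kolmgroup1}, which is itself a direct consequence of Theorem \ref{thm:Ito_Hilbert}. Fix any two candidate solutions $F_1$, $F_2$ in the prescribed class and set $F:=F_1-F_2$; since \call{Kolmog group} is linear in the unknown and $F_1, F_2$ share the terminal datum $\phi$, the difference $F$ solves \call{Kolmog group} with terminal condition identically zero, belongs to the class \call{class}, and inherits in each of the two cases the relevant boundedness assumption (boundedness of $F$ in (i), boundedness of $DF$ in (ii)). Fixing $(t_0,x^0)\in[0,T]\times H$ and choosing by hypothesis a continuous $H$-valued mild solution $X$ of \call{SDE group} starting from $(t_0,x^0)$, formula \call{eq:kolmgroup1} applied to $F$ reads
\begin{equation*}
F\left(t,X(t)\right)=F\left(t_0,x^0\right)+\int_{t_0}^t\left\langle DF\left(s,X(s)\right),C\left(s,X(s)\right)dW(s)\right\rangle,\qquad t\in[t_0,T].
\end{equation*}
Setting $t=T$ and using $F(T,\cdot)\equiv 0$ yields
\begin{equation*}
F\left(t_0,x^0\right)=-\int_{t_0}^T\left\langle DF\left(s,X(s)\right),C\left(s,X(s)\right)dW(s)\right\rangle.
\end{equation*}
Since the left-hand side is deterministic, it suffices to show that the stochastic integral on the right has vanishing expectation; that conclusion forces $F_1(t_0,x^0)=F_2(t_0,x^0)$ for the arbitrary initial datum and thus completes the proof in both cases.

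In case (ii) this last step is immediate: the hypothesis $\left\Vert DF\right\Vert_\infty<\infty$ combined with $C\in C_b\left([0,T]\times H;L(H,H)\right)$ makes the integrand $DF(s,X(s))C(s,X(s))$ uniformly bounded on $[t_0,T]\times\Omega$; the stochastic integral is then a genuine square-integrable martingale starting at $0$, its expectation vanishes, and hence $F(t_0,x^0)=0$.

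In case (i), by contrast, $DF$ and $C$ need not be bounded on $[0,T]\times H$, so I would introduce the localizing stopping times
\begin{equation*}
\tau_n:=\inf\left\{t\in[t_0,T]\colon\int_{t_0}^t\left\Vert DF\left(s,X(s)\right)\right\Vert^2\left\Vert C\left(s,X(s)\right)\right\Vert^2_{L(H,H)}ds\geq n\right\}\wedge T.
\end{equation*}
Continuity of $X$ in $H$ makes the trajectory $\{X(s)\}_{s\in[t_0,T]}$ a compact subset of $H$ on which the continuous maps $DF$ and $C$ are bounded, which forces $\tau_n\uparrow T$ almost surely. The stopped stochastic integral is then a true martingale with zero mean, so $\bE\left[F\left(\tau_n,X(\tau_n)\right)\right]=F(t_0,x^0)$; since $F$ is bounded and $F(T,X(T))=0$, joint continuity of $F$ and continuity of $X$ together with dominated convergence send the left-hand side to $0$ as $n\to\infty$, yielding $F(t_0,x^0)=0$. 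The only mildly delicate point in the whole argument is the localization in (i): the choice of $\tau_n$ must be such that $\tau_n\to T$, which is a routine consequence of the continuity of $X$ and the continuity of $DF$ and $C$. Everything else is a direct bookkeeping consequence of Theorem \ref{thm:Ito_Hilbert}.
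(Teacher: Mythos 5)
Your proof is correct and follows essentially the same route as the paper: both hinge on the representation \call{eq:kolmgroup1} obtained from Theorem \ref{thm:Ito_Hilbert} and on showing that the stochastic integral has zero expectation, which in case (ii) is immediate from the boundedness of $DF$ and $C$. The only difference is in the bookkeeping for case (i): the paper observes that boundedness of $F$ makes the local martingale in \call{eq:kolmgroup1} bounded, hence a uniformly integrable martingale, and reads off $F\left(t_0,x^0\right)=\bE\left[\phi\left(X(T)\right)\right]$ directly for each solution, whereas you take the difference of two solutions and justify the vanishing expectation by localization with the stopping times $\tau_n$ plus dominated convergence --- an equally valid, slightly more hands-on version of the same argument.
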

\begin{proof}
 i) The stochastic integral $\int_{t_{0}}^{t}\left\langle DF\left(
s,X\left(  s\right)  \right)  ,C\left(  s,X\left(  s\right)  \right)
dW\left(  s\right)  \right\rangle $ is a local martingale. Since $F$ is
bounded, from identity \call{eq:kolmgroup1} it follows that it is uniformly integrable, hence
it is also a martingale. Therefore, taking expectation in \call{eq:kolmgroup1}, we get
$F\left(  t_{0},x^{0}\right)  =E\left[  \varphi\left(  X\left(  T\right)
\right)  \right]  $, formula which identifies $F$, since $t_{0}$ and $x^{0}$
are arbitrary.

ii) If $C$ and $DF$ are bounded, the the stochastic integral in \call{eq:kolmgroup1} is a
martingale. We conclude as in i). 
\end{proof}

\section{Application to path-dependent
functionals\label{sec:infinite_reformulation}}
We will now apply the abstract results of section \ref{sec:ItoBanach} to obtain an It\^o formula for path-dependent functionals of continuous processes, stated in theorem \ref{thm:ItoPath}. In the first sections we will introduce the necessary spaces and operators and we will show that the infinite-dimensional reformulation of path-dependent problems appears naturally when dealing with path-dependent SDEs (see again \cite{FZ15} for a more detailed discussion).
\subsection{Infinite-dimensional formulation of It\^o processes}
\label{subsec:infinite_reformulation_proc}
In this and the following sections we will denote by $C_t$ the space of $\bR^d$-valued functions on $[0,t]$ that can have a jump only at $t$, that is
\begin{equation*}
  C_t=\left\{\phi:[0,t]\to\bR^d\colon\phi\in C\left([0,t);\bR^d\right),\ \exists \lim_{s\to t^-} \phi(s)\in\bR^d\right\}
\end{equation*}
endowed with the supremum norm. The space $C_t$ is clearly isomorphic to the product space
\begin{equation*}
  \bR^d\times\left\{\phi\in C\left([0,t);\bR^d\right),\ \exists \lim_{s\to t^-} \phi(s)\in\bR^d\right\}\ .
\end{equation*}

Let $y(t)$ be the continuous process in $\bR^d$ given by
\begin{equation}
  \label{eq:ItoRd}
  y(t)= y^0+\int_0^tb(s)\ud s+\int_0^tc(s)\ud W(s)\ ,
\end{equation}
where $b$ and $c$ are progressively measurable processes, with values in $\bR^d$ and $\bR^{k\times d}$ respectively, such that
\begin{equation*}
  \int_0^T\left\vert b(s)\right\vert\ud s<\infty\ ,\qquad \int_0^T\left\Vert c(s)\right\Vert^2\ud s<\infty
\end{equation*}
and $y^0$ is a $\sF_0$-measurable random vector.\\

Let us introduce the following infinite-dimensional reformulation. 
We will work in the space
\begin{equation}
\label{eq:Epathdep}
E=\mathds{R}^{d}\times \left\{\phi\in C\left([-T,0);\bR^d\right)\colon \exists\lim_{s\to 0^-}\phi(s)\in\bR^d\right\}\ \text{,}
\end{equation}
whose elements we shall usually denote by $x=\xphi{x_1}{x_2}$. $E$ is a Banach space when endowed with the norm $\left\Vert\xphi{x_1}{x_2}\right\Vert^2=\left\vert x_1\right\vert^2+\left\Vert x_2\right\Vert^2_\infty$; the notation $\left\langle \cdot,\cdot\right\rangle $ will denote the duality pairing between $E$ and its dual space $E^\ast$.
The space $E$ is densely and continuously embedded in the product space
\begin{equation}
  \label{eq:Hpathdep}
H=\bR^d\times L^2\left(-T,0;\bR^d\right)\ \text{.}
\end{equation}
We also introduce the unbounded linear operator $A:D\left(
A\right)  \subset H\rightarrow H$ defined as%
\begin{equation}
\label{eq:DAE}
D\left(A\right)=\left\{\xphi{x_1}{x_2}\in H \colon x_2\in W^{1,2}\left(-T,0;\bR^d\right),\ x_1=\lim_{s\to 0^-}x_2(s)\right\}\ \text{,}
\end{equation}
\begin{equation}
\label{eq:A}
A=\left(
\begin{array}
[c]{cc}%
0 & 0\\
0 & \frac{d}{dr}%
\end{array}
\right), 
\end{equation}
where we identify an element in $W^{1,2}$ with the restriction of its continuous version to $[-T,0)$.
Therefore we identify also the space
 \begin{equation}
 \label{eq:Etildepathdep}
 \widetilde E=\overline{D\left(A\right)}^E=\left\{y=\xphi{x_1}{x_2}\in E \colon x_1=\lim_{s\to 0^-}x_2(s)\right\}\ .
 \end{equation}
The operator $A$ generates a strongly continuous semigroup $e^{tA}$ in $H$. This semigroup turns out to be not strongly continuous in $E$; nevertheless $e^{tA}$ maps $\widetilde E$ in itself and is strongly continuous in $\widetilde E$. This follows from the fact that the semigroup $e^{tA}$ has the explicit form
\begin{equation}
\label{eq:etA}
e^{tA}x=\left(
\begin{array}
[c]{c}%
x_{1}\\
x_{2}\left(  \cdot+t\right)  \ind_{\left[  -T,-t\right)  }+x_{1}\ind_{\left[
-t,0\right)  }%
\end{array}
\right)
\end{equation}
(see \cite{BDPDM} for details on the operator $A$ in the context of delay equations and \cite{FZ15} about its role in the theory of path-dependent equations).\\
For any $t\in[0,T]$ we introduce the operator
\begin{equation*}
L^{t}:C_t  \rightarrow E,
\end{equation*}
defined as
\begin{equation}
\label{eq:Lt}
L^{t}\gamma  =\left(\begin{matrix}\gamma(t) \\ \gamma(0)\ind_{[-T,-t)}+\gamma(t+\cdot)\ind_{[-t,0)}\end{matrix}\right)\ 
\end{equation}
for every $\gamma\in C_t  $.\\
Using \call{eq:etA}, it is easy to show (see also proposition \ref{prop:reformulation} below) that 
\begin{equation*}
  X(t)=L^ty_t\ ,
  \end{equation*}
as a $H$-valued process, is given by
\begin{equation}
  \label{eq:procH}
  X(t)=e^{tA}X^0+\int_0^t\etsa B(s)\ud s+\int_0^t\etsa C(s)\ud W(s),
\end{equation}
where $X^0=\xphi{x^0}{x^0\ind_{[-T,0)}}$ and the processes $B:[0,T]\to E$ and $C:[0,T]\to L\left(\bR^k,E\right)$ are given by
\begin{equation}
  \label{eq:BC}
  B(t)=\left(\begin{matrix}b(s)\\0\end{matrix}\right)\ ,\qquad C(s)u=\left(\begin{matrix}c(s)u\\0\end{matrix}\right)\;\text{for }u\in \bR^k\ .
\end{equation}
The validity of \call{eq:procH} corresponds to saying that $X$ is the unique mild solution to the linear equation
\begin{equation}
  \label{eq:procSDE}
  \ud X(t)=AX(t)\ud t+B(t)\ud t+C(t)\ud W(t)\ \text{;}
\end{equation}
hence we see that our infinite-dimensional reformulation forces us to deal with \emph{equations} even if we start from finite-dimensional processes: the operator $A$ appears as a consequence of the introduction of the second component that represents the ``past trajectory'' of the process (see remark \ref{rem:eqnA}).\\
\begin{proposition}
\label{prop:continuity}
The process $X$ is such that $X(t)\in\widetilde E$ for every $t$ and the trajectories $t\mapsto X(t)$ are almost surely continuous as maps from $[0,T]$ to $\widetilde E$.
\end{proposition}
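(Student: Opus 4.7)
The plan is to unpack the explicit expression $X(t)=L^{t}y_{t}$ given by \eqref{eq:Lt} and directly verify the two claims using the almost sure continuity (hence uniform continuity on $[0,T]$) of the real-valued process $y$. The norm on $\widetilde E$ is the restriction of the norm of $E$, namely $\left\Vert\xphi{x_1}{x_2}\right\Vert^2=|x_1|^2+\VV x_2\VV^2_\infty$, so everything reduces to estimates in finite dimensions and in sup norm on a shrinking window.

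For the first claim I would fix $\omega$ in the set of full probability where $y(\omega,\cdot)$ is continuous on $[0,T]$, take any $t\in[0,T]$, and write
\begin{equation*}
X(\omega,t)=\xphi{y(\omega,t)}{y(\omega,0)\ind_{[-T,-t)}+y(\omega,t+\cdot)\ind_{[-t,0)}}.
\end{equation*}
The second component belongs to $C([-T,0);\bR^d)$ and its limit as $s\to 0^-$ equals $\lim_{s\to 0^-}y(\omega,t+s)=y(\omega,t)$ by continuity of $y$ at $t$; this coincides with the first component, so by the definition \eqref{eq:Etildepathdep} we get $X(\omega,t)\in \widetilde E$.

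For the continuity part, take $0\le t<t'\le T$ and compute the second component of $X(t')-X(t)$ piecewise on $[-T,0)$: it vanishes on $[-T,-t')$, equals $y(t'+r)-y(0)$ on $[-t',-t)$, and equals $y(t'+r)-y(t+r)$ on $[-t,0)$. Consequently
\begin{equation*}
\VV X(t')-X(t)\VV_E^2\le |y(t')-y(t)|^2+\sup_{u\in[0,t'-t)}|y(u)-y(0)|^2+\sup_{u\in[0,t)}|y(u+(t'-t))-y(u)|^2.
\end{equation*}
Since $y(\omega,\cdot)$ is continuous on the compact interval $[0,T]$ it is uniformly continuous there, and all three terms on the right-hand side tend to $0$ as $t'-t\to 0$; the case $t=0$ (or the symmetric case $t'<t$) is treated identically. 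Hence $t\mapsto X(\omega,t)$ is continuous from $[0,T]$ into $E$, and since all values lie in $\widetilde E$ this yields continuity into $\widetilde E$ as well. The only real point to be careful about is that the sup-norm estimate above relies crucially on the uniform continuity of a single trajectory of $y$, which is why the statement is a.s.\ rather than in $L^p$; no tightness or stochastic estimate is needed beyond the a.s.\ continuity of paths of $y$ already built into \eqref{eq:ItoRd}.
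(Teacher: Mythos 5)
Your argument is correct and is essentially the paper's own proof: the paper likewise checks membership in $\widetilde E$ from the explicit form of $L^ty_t$ and the continuity of $y$, and then establishes continuity in $t$ via the same piecewise decomposition of the second component, bounding $\left\Vert X(t')-X(t)\right\Vert$ by $\left\vert y(t')-y(t)\right\vert$ plus suprema of increments of $y$ over windows of length $t'-t$, all controlled by the a.s.\ uniform continuity of $y$ on $[0,T]$. No changes needed.
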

\begin{proof}
The random variable $X^0$ takes values in $\widetilde E$ by definition. Since the process $y$ has almost surely continuous trajectories, $\left(L^ty_t\right)_2\in C\left([-T,0);\bR^d\right)$ and $L^ty_t$ belongs to $\widetilde E$. To check the almost sure continuity of the trajectories of $X$ as a $\widetilde E$-valued process denote again by $\Omega_0\subset\Omega$ a null set such that $t\mapsto y(\omega,t)$ is continuous for every $\omega\in\Omega\setminus\Omega_0$, fix $\omega\in\Omega\setminus\Omega_0$, fix $t,s\in[0,T]$ and $\epsilon>0$; we can suppose $t>s$ without loss of generality. Since $y\left(\omega,\cdot\right)$ is uniformly continuous on $[0,T]$ we can find $\delta$ such that $\left\vert y(t)-y(s)\right\vert<\nicefrac{\epsilon}{2}$ if $t-s<\delta$. Then for $t-s<\delta$
\begin{align*}
  \tensor{\left\Vert X(t)-X(s)\right\Vert}{_{\widetilde E}}&\leq\left\vert y(t)-y(s)\right\vert+\max\left\{\sup_{r\in[0,t-s]}\left\vert y(0)-y(r)\right\vert,\sup_{r\in[0,s]}\left\vert y(t-s+r)-y(r)\right\vert\right\}\\
  &\leq \epsilon\ .
  \end{align*}
\end{proof}

\subsection{Infinite-dimensional formulation of path-dependent functionals}
\label{subsec:functionals}

A path-dependent functional $f$ is a family of functionals $f\left(
t,\cdot\right)  $, $t\in\left[  0,T\right]  $, such that 
$$ f\left(t,\cdot\right):C_t\to\bR\ .$$
Among the examples of path-dependent functional, let us mention the integral ones%
\begin{equation}
f\left(  t,\gamma\right)  =\int_{0}^{t}g\left(\gamma(t),  \gamma\left(  s\right)
\right)  ds \label{example 1}%
\end{equation}
and those involving pointwise evaluations, like for instance%
\begin{equation}
f\left(  t,\gamma\right)  =q\left(  \gamma(t),\gamma(t_{0})\right)
\ind_{t> t_{0}}. \label{example 2}%
\end{equation}
Here we assume that $g:\mathds{R}^{d}\times\bR^d\rightarrow\mathds{R}$ is a measurable
function in the first example, with $\left\vert g\left(  a\right)
\right\vert \leq C\left(  1+\left\vert a\right\vert ^{2}\right)  $ and
that\ $q:\mathds{R}^{d}\times\mathds{R}^{d}\rightarrow\mathds{R}$ is a
measurable function in the second example and $t_{0}\in\left[  0,T\right]  $
is a given point. In order to apply It\^{o} calculus let us simplify and
assume that $g$ and $q$ are twice continuously differentiable with
bounded derivatives of all orders.

Given a path-dependent functional $f\left(  t,\cdot\right)  $, $t\in\left[
0,T\right]  $ we may associate to it a map
$F:\left[  0,T\right]  \times E\rightarrow\mathds{R}$ setting
\begin{equation}
\label{eq:F}
F\left(  t,x\right)  =f\left(  t,M_{t}x\right) 
\end{equation}
where
\begin{equation*}
  M_{t}:E\to C_t
\end{equation*}
is defined as
\begin{equation}
\label{eq:defM}
  M_{t}x  \left(  s\right)  =x_{2}\left(  s-t\right)\ind_{[0,t)}(s)+x_1\ind_{\{t\}}(s)  ,\qquad
s\in\left[  0,t\right]\  .
\end{equation}
\begin{remark}
\label{rem:ML}
Notice that $M_{t}L^{t}$ is the identity on $C_t $.
\end{remark}
Here we see that if $f$ were defined only on $C\left([0,t];\bR^d\right)$, then $F$ would be defined on $[0,T]\times \widetilde E$, because of the definition of the operators $M_t$; our abstract results require instead $F$ to be defined on $[0,T]\times E$, see remark \ref{rem:Etilde}. \\
Path-dependent functionals are often studied in spaces of càdlàg paths. The framework presented here can be easily modified to do so, similarly to what is done in \cite{FZ15}; however this would require the introduction of further spaces, thus complicating notations, but would not lead to generalizations of the result proved here.

The aim of this section is to show that examples (\ref{example 1}) and (\ref{example 2}) fulfill the assumptions of theorem \ref{thm:Ito_Banach}.

The abstract reformulation of the functional given in (\ref{example 1}) is the map
$F:\left[  0,T\right]  \times E\rightarrow\mathds{R}$ defined as%
\begin{align*}
F\left(  t,x\right)  =\int_{0}^{t}g\left(M_tx(t),  M_{t}x
\left(  s\right)  \right)  ds&=\int_{0}^{t}g\left(x_1,  x_{2}\left(  s-t\right)\right)  ds\\ &=\int_{-t}^{0}g\left(x_1,  x_{2}\left(  r\right)\right)  dr.
\end{align*}
Hence%
\begin{align*}
\frac{\partial F}{\partial t}\left(  t,x\right)   &  =g\left(x_1,  x_1\right)  -\int_{0}^{t}Dg\left(x_1,  x_{2}\left(  s-t\right)  \right)
\cdot x_{2}^{\prime}\left(  s-t\right)  ds\\
&  =g\left(x_1,x_1\right)-\int_{-t}^0g\left(x_1,  x_{2}\left(r\right)  \right)\cdot x_2^\prime\left(r\right)\ud r,   
\end{align*}
which is meaningful for example if $x_{2}$ belongs to $C^1\left([-T,0];\bR^d\right)$. Indeed since $A\big(D(A)\big)\subset\{0\}\times L^2\left(-T,0;\bR^d\right)$, we have 
\begin{equation*}
  \widetilde D=A^{-1}(E)=A^{-1}\left(\{0\}\times E_2\right),
\end{equation*}
where
\begin{equation*}
  E_2=\left\{\phi\in C\left([-T,0);\bR^d\right)\colon \exists\lim_{s\to 0^-}\phi(s)\in\bR^d\right\}\ \text{,}
\end{equation*}
and so
\begin{equation*}
  \widetilde D=\left\{\xphi{x_1}{x_2}\in D(A)\colon x_2\in C^1\left([-T,0);\bR^d\right)\right\}\ .
\end{equation*}
Moreover, the time derivative of $F$ is defined for \emph{every} $t\in[0,T]$. Therefore we see that
\[
\frac{\partial F}{\partial t}:\left[  0,T\right]  \times \widetilde D\to \bR
\]
is a natural assumption, while $\frac{\partial F}{\partial t}:
\left[  0,T\right]  \times E\to\bR  $ would not be. Since $g$ is continuous we also have that $\partial_t F$ belongs to $C\left([0,T]\times \widetilde D;\bR\right)$.\\
Let us then investigate the function%
\[
G\left(  t,x\right)  :=\frac{\partial F}{\partial t}\left(  t,x\right)
+\left\langle Ax,DF\left(  t,x\right)  \right\rangle \qquad x\in \widetilde D,t\in\left[  0,T\right]  .
\]
For $h\in E$ we have
\begin{align*}
\left\langle h,DF\left(  s,x\right)  \right\rangle  &  =\lim_{\epsilon
\rightarrow0}\frac{1}{\epsilon}\int_{0}^{t}\left(  g\left(x_1,  \left(
x_{2}+\epsilon h_{2}\right)  \left(  s-t\right)  \right)  -g\left(x_1,
x_{2}\left(  s-t\right)  \right)  \right)  ds\\
&  =\int_{0}^{t}Dg\left(x_1,  x_{2}\left(  s-t\right)  \right)  \cdot
h_{2}\left(  s-t\right)  ds=\int_{-t}^{0}Dg\left(x_1,  x_{2}\left(  r\right)
\right)  \cdot h_{2}\left(  r\right)  dr.
\end{align*}
But then
\begin{equation*}
  G\left(s,x\right)=g\left(x_1,x_1\right);
\end{equation*}
thus we see that the function $G\left(  s,x\right)  $ is well defined on $E$ too! The assumption $G\in C\left(  \left[  0,T\right]  \times \widetilde E\right)  $ is fulfilled.\\
The abstract reformulation of the functional given in (\ref{example 2}) is the map
$F:E\rightarrow\mathds{R}$ defined as%
\begin{align*}
F\left(  t,x\right)    &=q\left(  M_{t}x(t),M_{t}x(t_{0})\right)
\ind_{t> t_{0}}=q\left(  x_{1},x_{2}\left(  t_{0}-t\right)  \right)  \ind_{t>t_{0}}\ .
\end{align*}
Hence, writing $\partial_1q$ and $\partial_2q$ for the derivatives of $q$ with respect to its first and second variable, respectively, for $t\neq t_{0}$,
\begin{equation*}
\frac{\partial F}{\partial t}\left(  t,x\right)  =-\partial_2q\left(
x_{1},x_{2}\left(  t_{0}-t\right)  \right)  \cdot x_{2}^{\prime}\left(
t_{0}-t\right)  \ind_{t> t_{0}},
\end{equation*}
which requires $x_2\in C^1$. 

But%
\begin{align*}
G\left(  t,x\right)   &  =-\partial_{2}q\left(  x_{1},x_{2}\left(  t_{0}-t\right)
\right)  \cdot x_{2}^{\prime}\left(  t_{0}-t\right)  \ind_{t>t_{0}}\\
&  +\partial_{1}q\left(  x_{1},x_{2}\left(  t_{0}-t\right)  \right)  \ind_{t>t_{0}%
}\cdot\left(  Ax\right)  _{1}+\partial_{2}q\left(  x_{1},x_{2}\left(
t_{0}-t\right)  \right)  \ind_{t>t_{0}}\cdot\left(  Ax\right)  _{2}\left(
t_{0}-t\right)  \\
&  =0,
\end{align*}
because $\left(  Ax\right)  _{1}=0$ and $\left(  Ax\right)  _{2}\left(
t_{0}-t\right)  =x_{2}^{\prime}\left(  t_{0}-t\right)  $. Again, $G$ extends
continuously to $E$.

\subsection{infinite-dimensional formulation of path-dependent SDEs}
\label{subsec:infinite_reformulation_SDE}

In subsection \ref{subsec:infinite_reformulation_proc} we have formulated a classical It\^{o} process as an
infinite-dimensional process given by a mild formula; this apparently not
natural formulation is suggested by the case when the process is the solution
of a path-dependent SDE. For these equations, the mild formulation is natural,
due to the similarity with delay equations, where the infinite-dimensional
approach is classical. Let us give here some details about the case of a
path-dependent SDE.

Let $\left(  \Omega,\mathcal{F},\bP\right)  $ be a complete probability space,
$\mathbb{F}=\left(  \mathcal{F}_{t}\right)  _{t\geq0}$ a complete filtration,
$\left(  W\left(  t\right)  \right)  _{t\geq0}$ a Brownian motion in
$\mathds{R}^{k}$ (we shall write $W^{i}\left(  t\right)  $ for its
coordinates), $y^{0}$ an $\mathcal{F}_{0}$-measurable random vector of
$\mathds{R}^{d}$. Consider the path-dependent SDE in $\mathds{R}^{d}$%
\[
dy\left(  t\right)  =b\left(  t,y_{t}\right)  dt+\sigma\left(  t,y_{t}\right)
dW\left(  t\right)  ,\qquad y\left(  0\right)  =y^{0}.
\]
The solution $\left(  y\left(  t\right)  \right)  _{t\in\left[  0,T\right]  }$
is a stochastic process in $\mathds{R}^{d}$ and $y_{t}$
denotes the window%
\[
y_{t}:=\left\{  y\left(  s\right)  \right\}  _{s\in\left[  0,t\right]  }.
\]
About $b$ and $\sigma$, initially  we assume that, for each
$t\in\left[  0,T\right]  $, the function $b\left(  t,\cdot\right)  $ maps $C\left(  [0,t];\mathds{R}^{d}\right)  $ into $\mathds{R}^{d}$ and the
function $\sigma\left(  t,\cdot\right)  $ maps $C\left(  [0,t];\mathds{R}%
^{d}\right)  $ into $\mathds{R}^{k\times d}$; moreover, we assume that $b$ ad $\sigma$ are
locally bounded measurable functions, for each $t\in\left[
0,T\right]  $, with bounds uniform in $t$ and that the processes $b\left(
t,y_{t}\right)  $ and $\sigma\left(  t,y_{t}\right)  $ are progressively
measurable. These are relatively
weak requirements to give a meaning to the integral equation%
\begin{equation}
y\left(  t\right)  =y^{0}+\int_{0}^{t}b\left(  s,y_{s}\right)  ds+\int_{0}%
^{t}\sigma\left(  s,y_{s}\right)  dW\left(  s\right)  .\label{path dep SDE}%
\end{equation}
If, in addition, we assume that $b\left(  t,\cdot\right)  $ and $\sigma\left(
t,\cdot\right)  $ are Lipschitz continuous from $C\left(  [0,t];\mathds{R}%
^{d}\right)  $ to $\mathds{R}^{d}$ and $\mathds{R}^{k\times d}$ respectively,
with Lipschitz constants independent of $t$, then existence and uniqueness of a continuous
solution, adapted to the completed filtration of $W$, holds true. We shall
also write $\sigma\left(  t,y_{t}\right)  dW\left(  t\right)  $ as $\sum
_{i=1}^{k}\sigma_{i}\left(  t,y_{t}\right)  dW^{i}\left(  t\right)  $.

We take the operator $A$ as in section \ref{subsec:infinite_reformulation_proc} and we define the continuous nonlinear operators $B:\left[  0,T\right]  \times \widetilde E\rightarrow E$, $C_{i}:\left[  0,T\right]  \times \widetilde E\rightarrow E$, $i=1,\dots,k$, as%
\begin{equation}
\label{eq:B}
B\left(  t,x\right)  =\left(
\begin{array}
[c]{c}%
b\left(  t,M_{t}x\right)  \\
0
\end{array}
\right)
\end{equation}

\begin{equation}
\label{eq:C}
C_{i}\left(  t,x\right)  =\left(
\begin{array}
[c]{c}%
\sigma_{i}\left(  t,M_{t}x\right)  \\
0
\end{array}
\right)\ .
\end{equation}
Finally, we set $U=\mathds{R}^{k}$, take $Q$ equal to the identity in $U$ and
consider, for every $\left(  t,x\right)  \in\left[  0,T\right]  \times E$, the
bounded linear operator $C\left(  t,x\right)  :U\rightarrow E$ having
components $C_{i}\left(  t,x\right)  $.\\
Given a $\sF_0$-measurable random variable $X^0$ with values in $E$ we may now formulate the path-dependent SDE in the Banach space $E$, i.e.%
\begin{equation}
\label{eq:SDEinfpath}
dX\left(  t\right)  =\left(  AX\left(  t\right)  +B\left(  t,X\left(
t\right)  \right)  \right)  dt+C\left(  t,X\left(  t\right)  \right)
dW\left(  t\right)\ ,\quad X(0)=X^0\  \text{.}
\end{equation}
The natural concept of solution here would be that of mild solution, but since under our assumptions the stochastic convolution is a priori well defined only in $H$, we consider equation \call{eq:SDEinfpath} in its mild form in the space $H$, that is
\begin{equation}
X\left(  t\right)  =e^{tA}X^{0}+\int_{0}^{t}e^{\left(  t-s\right)  A}B\left(
s,X\left(  s\right)  \right)  ds+\int_{0}^{t}e^{\left(  t-s\right)  A}C\left(
s,X\left(  s\right)  \right)  dW\left(  s\right)\  .\label{SDE in Hilbert}%
\end{equation}
Notice that since $E$ is continuously embedded in $H$ all the integrals in \call{SDE in Hilbert} are meaningful.\\

In the following proposition we clarify the relation between equation \call{SDE in Hilbert} and equation \call{path dep SDE}. 
\begin{proposition}
\label{prop:reformulation}Given an $\mathcal{F}_{0}$-measurable random
vector $y^{0}$  of $\mathds{R}^{d}$, set $X^{0}=\left(  y^{0},y^{0}%
  \ind_{[-T,0)}\right)  ^{T}$. Then, if $\left\{  y\left(  t\right)  \right\}
_{t\in\left[  0,T\right]  }$ is a solution to equation (\ref{path dep SDE}),
the process
\begin{equation}
  \label{eq:Xy}
X\left(  t\right)  =L^ty_t
\end{equation}
is a solution to equation (\ref{SDE in Hilbert}). We also have%
\begin{equation}
y_{t}=M_{t}X(t)\ .\label{eq:yMX}%
\end{equation}
\end{proposition}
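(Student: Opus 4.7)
The plan is to verify \eqref{eq:Xy} by projecting the mild equation \eqref{SDE in Hilbert} onto the two factors of $E = \mathds{R}^d \times E_2$ and showing componentwise equality with $L^t y_t$, then to deduce \eqref{eq:yMX} immediately from Remark \ref{rem:ML}.

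First I would treat the $\mathds{R}^d$-component. Since $e^{tA}$ acts as the identity on the first coordinate by \eqref{eq:etA}, and since $B(s,X(s))$ and $C(s,X(s))$ have trivial second component by \eqref{eq:B}--\eqref{eq:C}, projecting \eqref{SDE in Hilbert} on the first coordinate collapses to
\begin{equation*}
X(t)_1 = y^0 + \int_0^t b(s, M_s X(s)) \, ds + \int_0^t \sigma(s, M_s X(s)) \, dW(s).
\end{equation*}
If $X(t) = L^t y_t$ then $M_s X(s) = M_s L^s y_s = y_s$ by Remark \ref{rem:ML}, so the right-hand side equals $y(t)$ by \eqref{path dep SDE}, which coincides with $(L^t y_t)_1 = y_t(t) = y(t)$.

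Next I would treat the second component. Using the explicit form \eqref{eq:etA} of $e^{tA}$, a direct calculation gives $\bigl(e^{tA} X^0\bigr)_2 = y^0 \mathds{1}_{[-T,0)}$ (the shifted piece $y^0 \mathds{1}_{[-T,-t)}$ plus the fill-in $y^0 \mathds{1}_{[-t,0)}$ recombine). For the drift and noise terms, since the second component of $B(s)$ and $C(s)$ vanishes, only the fill-in part $x_1 \mathds{1}_{[-(t-s),0)}$ of \eqref{eq:etA} contributes; hence for $r \in [-T,0)$
\begin{equation*}
X(t)_2(r) = y^0 + \int_0^t b(s, y_s) \mathds{1}_{[s-t,0)}(r) \, ds + \int_0^t \sigma(s, y_s) \mathds{1}_{[s-t,0)}(r) \, dW(s).
\end{equation*}
For $r \in [-T,-t)$ the indicator vanishes identically in $s \in [0,t]$, yielding $y^0$. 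For $r \in [-t,0)$ the indicator selects $s \in [0, t+r]$, yielding $y^0 + \int_0^{t+r} b(s,y_s)\, ds + \int_0^{t+r} \sigma(s,y_s)\, dW(s) = y(t+r)$. This matches $(L^t y_t)_2(r) = y(0)\mathds{1}_{[-T,-t)}(r) + y(t+r)\mathds{1}_{[-t,0)}(r)$.

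The only subtlety is moving the deterministic indicator $\mathds{1}_{[s-t,0)}(r)$ in or out of the stochastic integral when identifying the $r$-section; this is justified by a standard stochastic Fubini argument (or equivalently by restricting the integration domain to $s \in [0, t+r]$) using the square-integrability of $\sigma(\cdot, y_\cdot)$. The second identity \eqref{eq:yMX} is then immediate: applying $M_t$ to $X(t) = L^t y_t$ and invoking Remark \ref{rem:ML} gives $M_t X(t) = M_t L^t y_t = y_t$.
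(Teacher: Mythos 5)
Your proposal is correct and follows essentially the same route as the paper: projecting the mild equation \eqref{SDE in Hilbert} onto the two components via the explicit semigroup formula \eqref{eq:etA}, using $M_sL^sy_s=y_s$ to reduce the first component and the $r$-sections of the second component (separately on $[-T,-t)$ and $[-t,0)$) to copies of \eqref{path dep SDE}, with \eqref{eq:yMX} following from Remark \ref{rem:ML}. Your explicit remark on justifying the identification of the $r$-section of the stochastic convolution is a small added precision over the paper's computation, not a different argument.
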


\begin{proof}
By \call{eq:etA} the first component of equation (\ref{SDE in Hilbert}) reads%
\begin{align*}
\left(L^ty_t\right)_1=y\left(  t\right)    & =X_{1}\left(  t\right)  =X_{1}^{0}+\int_{0}^{t}B\left(
s,X\left(  s\right)  \right)  _{1}ds+\int_{0}^{t}C\left(  s,X\left(  s\right)
\right)  _{1}dW\left(  s\right)  \\
& =y^{0}+\int_{0}^{t}b\left(  s,M_{s}X\left(  s\right)  \right)
ds+\int_{0}^{t}\sigma\left(  s,M_{s}X\left(  s\right)  \right)  dW\left(
s\right)  \\
& =y^{0}+\int_{0}^{t}b\left(  s,M_{s}L^{s}y_{s}\right)  ds+\int_{0}%
^{t}\sigma\left(  s,M_{s}L^{s}y_{s}\right)  dW\left(  s\right)  \\
& =y^{0}+\int_{0}^{t}b\left(  s,y_{s}\right)  ds+\int_{0}^{t}\sigma\left(
s,y_{s}\right)  dW\left(  s\right),
\end{align*}
which holds true because \textit{it is} equation (\ref{path dep SDE}). About
the second component, we have%
\begin{align*}
\left(  L^{t}y_{t}\right)_2  \left(  r\right)    & =X_{2}\left(  t\right)
\left(  r\right)  =X_{2}^{0}\left(  r+t\right)  1_{\left[  -T,-t\right]
}\left(  r\right)  +X_{1}^{0}1_{\left[  -t,0\right]  }\left(  r\right)  \\
& +\int_{0}^{t}b\left(  s,M_{s}X\left(  s\right)  \right)  1_{\left[
-t+s,0\right]  }\left(  r\right)  ds+\int_{0}^{t}\sigma\left(  s,M_{s}X\left(
s\right)  \right)  1_{\left[  -t+s,0\right]  }\left(  r\right)  dW\left(
s\right)  \\
& =y^{0}+\int_{0}^{t}b\left(  s,y_{s}\right)  1_{\left[  -t+s,0\right]
}\left(  r\right)  ds+\int_{0}^{t}\sigma\left(  s,y_{s}\right)  1_{\left[
-t+s,0\right]  }\left(  r\right)  dW\left(  s\right)  .
\end{align*}
For $r\in\left[  -T,-t\right]  $ this identity reads $y^{0}=y^{0}$, which is
true. For $r\in\left[  -t,0\right]  $ we have
\[
y\left(  t+r\right)  =y_{t}(t+r)=y^{0}+\int_{0}^{t+r}b\left(  s,y_{s}\right)
ds+\int_{0}^{t+r}\sigma\left(  s,y_{s}\right)  dW\left(  s\right),
\]
because $\ind_{\left[  -t+s,0\right]  }\left(  r\right)  =0$ for $s\in\left[
t+r,t\right]  $. This is again a copy of equation (\ref{path dep SDE}). The
proof is complete.
\end{proof}

\begin{remark}
\label{rem:eqnA}
We have seen that, at the level of the mild formulation, the equation in
Hilbert space is just given by two copies of the original SDE. On the
contrary, at the level of the differential formulation, we formally have
\begin{align*}
dX_{1}\left(  t\right)   &  =B\left(  t,M_{t}X\left(  t\right)  \right)
dt+C_{i}\left(  t,M_{t}X\left(  t\right)  \right)  dW\left(  t\right)  \\
dX_{2}\left(  t\right)   &  =\frac{d}{dr}X_{2}\left(  t\right)  dt.
\end{align*}
The first equation, again, is a rewriting of the path-dependent SDE. But the
second equation is just a consistency equation, necessary since we need to
introduce the component $X_{2}\left(  t\right)  $. Here we see one of the
technical problems which motivate this paper: $X_{2}\left(  t\right)
=\left(L^{t}y_{t}\right)_2$ is "never" differentiable (being a trajectory of solution of the
SDE, it has the level of regularity of Brownian motion). In other words,
$X_{2}\left(  t\right)  $ "never" belongs to $D\left(  A\right)  $.
\end{remark}


\subsection{It\^o formula for path-dependent functionals}
\label{subsec:Ito_path}
Having introduced the previous infinite-dimensional reformulations, we can apply our abstract result of section \ref{sec:ItoBanach} to obtain a It\^o formula for path-dependent functionals of continuous paths. To this end we recall that we intend to apply theorem \ref{thm:Ito_Banach} to the  spaces
\begin{gather*}
H_1=\bR^d\\
E_2=\left\{\phi\in C\left([-T,0);\bR^d\right)\colon \exists\lim_{s\to 0^-}\phi(s)\in\bR^d\right\}\ \text{,}\\
H_2=L^2\left(-T,0;\bR^d\right)\ \text{,}\\
E=H_1\times E_2\ \text{,}\\
H=H_1\times H_2\ \text{,}\\
\widetilde E=\left\{\xphi{x_1}{x_2}\in E \colon x_1=\lim_{s\to 0^-}x_2(s)\right\}\ \text{,}\\
\widetilde D=\left\{\xphi{x_1}{x_2}\in E \colon x_2\in C^1\left([-T,0);\bR^d\right),\ x_1=\lim_{s\to 0^-}x_2(s)\right\}\ \text{,}\\
U=\bR^k
\end{gather*}
and to the operator $A$ on $H$ given by
\begin{equation*}
  A\xphi{x_1}{x_2}=\xphi{0}{\dot x_2}
\end{equation*}
on the domain
\begin{gather*}
  D\left(A\right)=\left\{\xphi{x_1}{x_2}\in\bR^d\times W^{1,2}\left(-T,0;\bR^d\right)\colon x_2(0)=x_1\right\}\ .
\end{gather*}
As before $y$ is a continuous process in $\bR^d$ given by
\begin{equation*}
  y(t)= y^0+\int_0^tb(s)\ud s+\int_0^tc(s)\ud W(s),
\end{equation*}
where $W$, $b$ and $c$ are as in subsection \ref{subsec:infinite_reformulation_proc} (we set $Q=Id_{\bR^k}$) and we set
\begin{equation*}
  X(t)=L^ty_t\ .
\end{equation*}
\begin{theorem}
  \label{thm:ItoPath}
Let $\left\{f(t,\cdot)\right\}_{t\in[0,T]}$, $f(t,\cdot):C_t\to\bR$, be a path-dependent functional and define
\begin{gather*}
  F:[0,T]\times E\longrightarrow\bR\\
  F(t,x)=f\left(t,M_tx\right)\ .
\end{gather*}
Suppose that
\begin{enumerate}[label=\emph{(\roman*)}]
\item $F\in C\left([0,T]\times E;\bR\right)$;
\item $F$ is twice differentiable in its second variable with $DF\in C\left([0,T]\times E;E^\ast\right)$ and $D^2F\in C\left([0,T]\times E;L\left(E;E^\ast\right)\right)$;
\item there exists a set $\sT\subset[0,T]$ such that $\lambda\left(\sT\right)=T$ and $F$ is differentiable with respect to $t$ on $\sT\times \widetilde D$;
\item there exists a continuous function $G:[0,T]\times \widetilde E\to\bR$ such that
  \begin{equation*}
    G(t,x)=\frac{\partial F}{\partial t}(t,x)+\langle Ax,DF(t,x)\rangle    
  \end{equation*}
for $(t,x)\in\sT\times \widetilde D$.
\end{enumerate}
Then the identity
\begin{align*}
  f\left(t,y_t\right)&=f\left(0,y_0\right)+\int_0^tG\left(s,X(s)\right)\ud s\\
  &+\int_0^t\left(\langle B(s),DF\left(s,X(s)\right)\rangle+\frac{1}{2}\tr_{\bR^d}\left[C(s)C(s)^\ast D^2F\left(s,X(s)\right)\right]\right)\ud s\\
  &+\int_0^t\langle DF\left(s,X(s)\right),C(s)\ud W(s)\rangle\
\end{align*}
holds in probability.
\end{theorem}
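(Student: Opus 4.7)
The plan is to apply Theorem~\ref{thm:Ito_Banach} directly to the functional $F$ and to the $E$-valued process $X(t)=L^ty_t$. Since $M_tL^t$ is the identity on $C_t$ (Remark~\ref{rem:ML}), one has $f(t,y_t)=F(t,X(t))$, so the desired finite-dimensional identity becomes an infinite-dimensional It\^o formula for $F(t,X(t))$. The first preliminary step is to verify that $X$ admits the mild representation \eqref{eq:procH} with $B$ and $C$ given by \eqref{eq:BC}; this is the linear analogue of Proposition~\ref{prop:reformulation} and follows by direct inspection of the explicit semigroup formula \eqref{eq:etA} together with the definitions of $L^t$ and $M_t$.

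Next I would check the remaining structural hypotheses of Theorem~\ref{thm:Ito_Banach}. The assumptions (i)--(iv) of Theorem~\ref{thm:ItoPath} match exactly the regularity hypotheses on $F$ and the existence of the continuous extension $G$ on $\widetilde E$ required in Theorem~\ref{thm:Ito_Banach}. By Proposition~\ref{prop:continuity} the trajectories $t\mapsto X(t)$ are almost surely continuous as $\widetilde E$-valued maps, so $\{X(t)\}_{t\in[0,T]}$ is relatively compact in $\widetilde E$ and hence in $E$, and $X(t)\in\widetilde E$ for every $t$. Strong continuity of $e^{tA}$ on $\widetilde E$ was already observed after \eqref{eq:Etildepathdep}.

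The nontrivial remaining ingredient is the construction of a sequence of operators $\sJ_n\colon H\to E$ satisfying the three properties (i)--(iii) listed just before the statement of Theorem~\ref{thm:Ito_Banach}. Because $A$ acts only on the second component and $\widetilde D$ is characterized by $x_2\in C^1\left([-T,0);\bR^d\right)$ together with the matching $x_1=\lim_{s\to 0^-}x_2(s)$, a natural candidate is $\sJ_n\xphi{x_1}{x_2}=\xphi{x_1}{S_n(x_1,x_2)}$, where $S_n$ convolves $x_2$ with a smooth mollifier along $[-T,0)$ and smoothly interpolates toward the value $x_1$ on a vanishing neighborhood of $0$. A Yosida-type choice, for instance $\sJ_n=J_n^2$ possibly composed with a small boundary cutoff, yields simultaneously $C^1$-regularity of the second component (via $D(A^2)\subset W^{2,2}$ and Sobolev embedding), uniform convergence $\sJ_nx\to x$ in the $E$-norm for $x\in E$, and commutation with $A$ on $D(A)$, since both mollification and shifts commute with $d/dr$. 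This construction is the main obstacle: reconciling uniform convergence in $r\in[-T,0)$ with the enforcement of the boundary matching $y_2(0^-)=y_1$ requires a careful balance between the cutoff scale and the mollification radius.

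With the operators $\sJ_n$ in hand, Theorem~\ref{thm:Ito_Banach} applies to the decomposition $H_1=\bR^d$, $H_2=L^2\left(-T,0;\bR^d\right)$, $E_2$ as in the statement, $U=\bR^k$ and $Q=\mathrm{Id}$, and delivers precisely the stated formula, with $\tr_{H_1}=\tr_{\bR^d}$; substituting $F(t,X(t))=f(t,y_t)$ on the left-hand side concludes the proof.
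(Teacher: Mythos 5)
Your overall route is the paper's: represent $X(t)=L^ty_t$ in mild form, use Proposition \ref{prop:continuity} for continuity and relative compactness of the trajectories in $\widetilde E$, and then apply Theorem \ref{thm:Ito_Banach}, identifying $f(t,y_t)=F(t,X(t))$ through Remark \ref{rem:ML}. The gap is exactly at the step you yourself flag as the main obstacle: the sequence $\sJ_n$ is never actually constructed, and neither of your two candidates works as stated. The Yosida choice $\sJ_n=J_n^2$ fails property (ii). Indeed, writing $y=(n-A)^{-1}x$ for $x=\xphi{x_1}{x_2}$, the first component gives $y_1=x_1/n$, and membership in $D(A)$ forces $\lim_{s\to 0^-}y_2(s)=y_1$; hence $\lim_{s\to 0^-}(J_nx)_2(s)=x_1$ and likewise $\lim_{s\to 0^-}(J_n^2x)_2(s)=x_1$ for every $n$. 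So for $x\in E\setminus\widetilde E$, i.e. $\lim_{s\to 0^-}x_2(s)\neq x_1$, uniform convergence of $(J_n^2x)_2$ to $x_2$ on $[-T,0)$ is impossible: the approximants are pinned to the value $x_1$ at $0^-$. This is the same phenomenon as the failure of strong continuity of $e^{tA}$ on $E$, and it is precisely why Theorem \ref{thm:Ito_Banach} demands operators converging to the identity on all of $E$, not merely on $\widetilde E$; a ``small boundary cutoff'' does not repair this without damaging (i) or (iii).

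Your other candidate, mollification of $x_2$ plus smooth interpolation toward $x_1$ near $0$, makes the second component of $\sJ_nx$ depend on $x_1$, and then commutation with $A$ on $D(A)$ is a genuine constraint rather than a consequence of ``mollification commutes with shifts'': one needs $\bigl(S_n(x_1,x_2)\bigr)'=S_n(0,x_2')$ for $x\in D(A)$, and the interpolation terms involving $x_1$ do not cancel automatically; moreover, on a bounded interval plain convolution does not commute with $\frac{d}{dr}$ near the endpoints. Achieving (i), (ii) and (iii) simultaneously is the whole content of this step, and the paper settles it with the explicit formula \eqref{eq:Jn}, taken from \cite{FZ15}: a mollifier precomposed with the flattening reparametrization $\tau_{\nicefrac{1}{n}}$, acting on the second component only and independently of $x_1$. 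Once such a sequence is exhibited, the remainder of your argument (the mild representation \eqref{eq:procH}, the choices $H_1=\bR^d$, $U=\bR^k$, $Q=\mathrm{Id}$, and $\tr_{H_1}=\tr_{\bR^d}$) coincides with the paper's proof.
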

\begin{proof}
  First notice that by proposition \ref{prop:continuity} and the discussion in subsection \ref{subsec:infinite_reformulation_proc} the process $X$ has continuous paths in $\widetilde E$, therefore the set $\left\{X(t)\right\}_{t\in[0,T]}$ is a compact set in $E$.
With this choice of $E$ and $H$, a sequence $\sJ_n:H\to E$ satisfying the requirements of theorem \ref{thm:Ito_Banach} can be constructed (following \cite{FZ15}) in this way:
for any $\epsilon\in\left(0,\nicefrac{T}{2}\right)$ define the function $\tau_\epsilon:[-T,0]\to[-T,0]$ as
\begin{equation*}
  \tau_\epsilon(x)=\begin{cases}-T+\epsilon &\mbox{if } x\in[-T,-T+\epsilon]\\x&\mbox{if } x\in[-T+\epsilon,-\epsilon]\\-\epsilon&\mbox{if }x\in[-\epsilon,0].\end{cases}
\end{equation*}
Then choose any $C^{\infty}(\bR;\bR)$ function $\rho$ such that $\VV\rho\VV_1=1$, $0\leq\rho\leq 1$ and $\supp(\rho)\subseteq[-1,1]$ and define a sequence $\left\{\rho_n\right\}$ of mollifiers by $\rho_n(x):=n\rho(nx)$. Set, for any $\phi\in L^2(-T,0;\bR^d)$
\begin{equation}
\label{eq:Jn}
  \rJ_n\phi(x):=\int_{-T}^0\rho_n\big(\rho_{2n}\ast\tau_{\frac{1}{n}}(x)-y\big)\phi(y)\ud y
\end{equation}
and finally define $\sJ_n$ as
\begin{equation*}
  \sJ_n\xphi{x}{\phi}:=\left(\begin{matrix}x\\ \rJ_n\phi\end{matrix}\right)\ \text{.}
\end{equation*}
The proof is then completed applying theorem \ref{thm:Ito_Banach} to the function $F$ and its extension $G$.
\end{proof}
\begin{remark}
\label{rem:need}
  The choice of the spaces $H$ and $E$ and of the operators $A$ and $\sJ_n$ does not depend on $F$ and is the same for all path-dependent functionals of continuous processes. The only assumptions that need to be checked on each functional are the regularity conditions and existence of the extension $G$.
\end{remark}
The path-dependent functional given in \call{example 2} is not covered by the previous result since it is not jointly continuous on $[0,T]\times E$. However it satisfies the assumptions of corollary \ref{coro:piecewise}, which we now state in its path-dependent formulation.
\begin{corollary}
  Let $f$ and $F$ be as in theorem \ref{thm:ItoPath}. If $F$ satisfies the assumptions of corollary \ref{coro:piecewise}, then the formula
\begin{align*}
  f\left(T,y_T\right)&=F\left(0,y_0\right)+\sum_{j=1}^n\left[f\left(t_j,y_{t_j}\right)-f\left(t_j-,y_{t_j}\right)\right]\\
  &+\int_0^TG\left(s,X(s)\right)\ud s+\int_0^T\langle DF\left(s,X(s)\right),C(s)\ud W(s)\rangle\\
  &+\int_0^T\left(\langle B(s),DF\left(s,X(s)\right)\rangle+\frac{1}{2}\tr_{H_1}\left[C(s)QC(s)^\ast D^2F\left(s,X(s)\right)\right]\right)\ud s
\end{align*}
holds in probability.
\end{corollary}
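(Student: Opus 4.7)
The plan is to reduce this corollary to Corollary \ref{coro:piecewise} in the same way that Theorem \ref{thm:ItoPath} is reduced to Theorem \ref{thm:Ito_Banach}. All the structural setup is already in place from the proof of Theorem \ref{thm:ItoPath}: the spaces $H_1,H_2,E_2,E,H,\widetilde E,\widetilde D$, the operator $A$, the data $B$ and $C$ from \eqref{eq:BC}, the process $X(t)=L^t y_t$ solving the mild equation \eqref{eq:procH}, and the regularising sequence $\sJ_n$ defined via \eqref{eq:Jn}. None of these objects depends on the specific functional $f$ (see Remark \ref{rem:need}), so I can reuse them verbatim.

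First, I would observe that by Remark \ref{rem:ML} we have $M_t X(t)=M_t L^t y_t=y_t$, so that
\begin{equation*}
F\bigl(t,X(t)\bigr)=f\bigl(t,M_t X(t)\bigr)=f(t,y_t)
\end{equation*}
for every $t\in[0,T]$, and similarly $F(t-,X(t))=f(t-,y_t)$ since $F(\cdot,y)$ is càdlàg for each fixed $y\in E$. Hence every jump of $F$ along the trajectory $s\mapsto X(s)$ is exactly a jump of $f$ along $s\mapsto y_s$, and the left-hand side of the desired identity is nothing but $F(T,X(T))$.

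Next, since $F$ satisfies the hypotheses of Corollary \ref{coro:piecewise}, and since Proposition \ref{prop:continuity} guarantees that $X$ takes values in $\widetilde E$ with a.s.\ continuous paths (so that $\{X(t)\}_{t\in[0,T]}$ is relatively compact in $E$), I can apply Corollary \ref{coro:piecewise} to the process $X$ and the function $F$. This yields
\begin{align*}
F\bigl(T,X(T)\bigr)&=F\bigl(0,X^0\bigr)+\sum_{j=1}^{n}\bigl[F(t_j,X(t_j))-F(t_j-,X(t_j))\bigr]\\
&\quad+\int_0^T G\bigl(s,X(s)\bigr)\,\ud s+\int_0^T\langle DF(s,X(s)),C(s)\,\ud W(s)\rangle\\
&\quad+\int_0^T\!\!\left(\langle B(s),DF(s,X(s))\rangle+\tfrac12\tr_{H_1}\bigl[C(s)Q C(s)^\ast D^2F(s,X(s))\bigr]\right)\ud s,
\end{align*}
and substituting $F(\cdot,X(\cdot))=f(\cdot,y_\cdot)$ on the left-hand side and inside the jump sum gives the stated formula.

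There is no real obstacle: the entire argument is a translation of Corollary \ref{coro:piecewise} through the dictionary $X(t)=L^t y_t$, $M_t X(t)=y_t$ already established in Proposition \ref{prop:reformulation}. The only minor point to verify is that the càdlàg regularity of $t\mapsto F(t,y)$ transfers correctly to $t\mapsto F(t,X(t))$ at the exceptional times $t_j$; this is automatic because $X$ itself is continuous with values in $\widetilde E$ (and hence in $E$) by Proposition \ref{prop:continuity}, so that the jumps of $F(\cdot,X(\cdot))$ coincide with the temporal jumps of $F$ evaluated at the continuous argument $X(t_j)$.
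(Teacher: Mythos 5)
Your proof is correct and follows exactly the route the paper intends: the corollary is just Corollary \ref{coro:piecewise} read through the path-dependent dictionary of Theorem \ref{thm:ItoPath} ($X(t)=L^t y_t$, $F(t,X(t))=f(t,y_t)$ via Remark \ref{rem:ML}, with Proposition \ref{prop:continuity} and the sequence $\sJ_n$ from \eqref{eq:Jn} supplying the hypotheses). Nothing essential is missing.
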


\section{Application to Kolmogorov equations}
\label{sec:Kolmogorov}
\subsection{Uniqueness of solutions}
\label{subsec:uniqueness}
We begin investigating Kolmogorov equation in the abstract setting of section \ref{sec:ItoBanach}, discussing the particular case of path-dependent Kolmogorov equations afterward.
Let the spaces $H$, $E$, $\widetilde E$, $\widetilde D$ and $U$, the Wiener process $W$ and the operator $A$ be as in section \ref{sec:ItoBanach}; given $B:[0,T]\times \widetilde E\to E$ and $C:[0,T]\times \widetilde E\to L\left(U;H_1\times\left\{0\right\}\right)$ we can consider the partial differential equation
 \begin{equation}
 \label{eq:PKolmogorov}
\begin{cases}
\frac{\partial V}{\partial t}(t,x)+\left\langle DV(t,x),Ax+B\left(  t,x\right)\right\rangle+\frac{1}{2}\tr_{H_1}\left(  C(s,x) Q C(s,x)^\ast D^{2}V(t,x)\right) =0\ ,\\ V(T,\cdot)=\Phi\ \text{,}
\end{cases}
\end{equation}
where the terminal condition $\Phi$ is chosen in $C_b^{2,\alpha}(E)$, the space of twice differentiable real-valued functions $\phi$ on $E$ such that $\phi$, $D\phi$ and $D^2\phi$ are bounded and the map $E\ni x\to D^2\phi(x)\in L\left(E;E^\ast\right)$ is $\alpha$-H\"older continuous. \\
\begin{definition}
\label{def:solE}
We say that a function $V:[0,T]\times E\to\bR$ is a \emph{classical solution} to equation \call{eq:PKolmogorov} in $E$ if 
\begin{equation*}
V\in L^{\infty}\left(  0,T;C_{b}^{2,\alpha}\left(E;\mathds{R}%
\right)  \right)  \cap C\left(  \left[  0,T\right]  \times E;\mathds{R}%
\right)\ \text{,}
\end{equation*}
$V$ is differentiable with respect to $t$ on $\sT\times \widetilde D$, $\sT\subset[0,T]$ being a set of full measure, and satisfies identity \call{eq:PKolmogorov} for every $t\in\sT  $ and $x\in \widetilde D$.
\end{definition}
Assume that $B$ and $C$ are continuous and such that the stochastic SDE
\begin{equation}
  \label{eq:SDE6}
  \ud X(s)=AX(s)+B\left(s,X(s)\right)\ud s+ C\left(s,X(s)\right)\ud W(s)\ \text{ for }s\in[t,T],\quad X(t)=x
\end{equation}
has a mild solution $X^{t,x}$ in $H$ for all $t\in[0,T]$ and all $x\in \widetilde E$, such that $X^{t,x}(s)$ belongs to $\widetilde E$ for all $s\in[t,T]$ and that the set $\left\{X^{t,x}(s)\right\}_{s\in[t,T]}$ is almost surely relatively compact in $E$.\\

\begin{theorem}
\label{thm:uniqueness}
Under the above assumptions any classical solution to equation \call{eq:PKolmogorov} is uniquely determined on the space $\widetilde E$
\end{theorem}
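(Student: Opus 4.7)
The plan is to apply the Banach-space It\^o formula of Theorem \ref{thm:Ito_Banach} to any classical solution $V$ evaluated along the mild solution $X^{t,x}$ of \call{eq:SDE6}, obtaining a stochastic representation formula for $V$. First, I would fix $(t,x)\in[0,T]\times\widetilde E$ and define the candidate extension
\begin{equation*}
G(s,y):=-\langle B(s,y),DV(s,y)\rangle-\frac{1}{2}\tr_{H_1}\left[C(s,y)QC(s,y)^{\ast}D^{2}V(s,y)\right]
\end{equation*}
for $(s,y)\in[0,T]\times\widetilde E$. Using continuity of $B$ and $C$, the uniform $C_{b}^{2,\alpha}$-bound on $V$, and the joint continuity of $V$ in $(t,x)$, one verifies that $DV$ and $D^{2}V$ are jointly continuous on $[0,T]\times E$ (pointwise continuity in $t$ follows from the uniform modulus provided by the H\"older bound combined with continuity of $V$). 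Hence $G$ is continuous on $[0,T]\times\widetilde E$, and by the very definition of a classical solution, $G(s,y)=\partial_s V(s,y)+\langle Ay,DV(s,y)\rangle$ on $\sT\times\widetilde D$.

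Next I would invoke Theorem \ref{thm:Ito_Banach} with this choice of $G$ applied to the process $s\mapsto X^{t,x}(s)$, $s\in[t,T]$; the hypotheses on $X^{t,x}$ (values in $\widetilde E$, relative compactness of trajectories in $E$) are exactly the standing assumptions, and the regularity of $V$ provides the smoothness required. The resulting It\^o formula reads
\begin{align*}
V\left(T,X^{t,x}(T)\right)&=V(t,x)+\int_{t}^{T}\Bigl[G\left(s,X^{t,x}(s)\right)+\langle B(s,X^{t,x}(s)),DV(s,X^{t,x}(s))\rangle\\
&\qquad+\tfrac{1}{2}\tr_{H_1}\left[C(s,X^{t,x}(s))QC(s,X^{t,x}(s))^{\ast}D^{2}V(s,X^{t,x}(s))\right]\Bigr]\ud s\\
&\quad+\int_{t}^{T}\langle DV(s,X^{t,x}(s)),C(s,X^{t,x}(s))\ud W(s)\rangle.
\end{align*}
By construction of $G$, the integrand of the Lebesgue integral vanishes identically, so the identity collapses to
\begin{equation*}
V(t,x)=\Phi\left(X^{t,x}(T)\right)-\int_{t}^{T}\langle DV(s,X^{t,x}(s)),C(s,X^{t,x}(s))\ud W(s)\rangle.
\end{equation*}

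Finally, since $DV$ is bounded on $[0,T]\times E$ by the $L^{\infty}(0,T;C_{b}^{2,\alpha})$-hypothesis and $C(s,\cdot)$ takes values in $L(U;H_1\times\{0\})$, the stochastic integral is a genuine martingale (the bounded integrand controls the Hilbert-space-valued It\^o integral in $H_1$). Taking expectations yields
\begin{equation*}
V(t,x)=\bE\left[\Phi\left(X^{t,x}(T)\right)\right],
\end{equation*}
which pins down $V$ uniquely on $[0,T]\times\widetilde E$ independently of the particular classical solution chosen, and also independently of the chosen mild solution to \call{eq:SDE6}. The main obstacle I anticipate is the verification that $DV$ and $D^{2}V$ are jointly continuous (so that $G$ is continuous on $[0,T]\times\widetilde E$ and Theorem \ref{thm:Ito_Banach} is applicable); this is the point where the $C_{b}^{2,\alpha}$-regularity, rather than mere $C^{2}$-regularity, is genuinely exploited, via an interpolation-type argument using the uniform H\"older modulus of $D^{2}V$ together with continuity of $V$ in time.
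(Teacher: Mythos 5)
Your proposal follows essentially the same route as the paper's own proof: you define $G$ exactly as the paper does (the negative of the drift plus trace terms, continuous on $[0,T]\times\widetilde E$ because $V$ solves \call{eq:PKolmogorov}), apply Theorem \ref{thm:Ito_Banach} along $X^{t,x}$ so that the Lebesgue integrals cancel, and take expectations of the remaining stochastic integral (a Hilbert-space integral since $C$ maps into $H_1\times\{0\}$) to get $V(t,x)=\bE\left[\Phi\left(X^{t,x}(T)\right)\right]$. Your additional remarks on the joint continuity of $DV$, $D^2V$ and on why the stochastic integral has zero expectation go slightly beyond what the paper writes out, and are consistent with it.
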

\begin{proof}
Suppose there exists a solution $V$. Since $DV$, $D^2V$, $B$ and $C$ are defined on $[0,T]\times \widetilde E$ and are continuous, the function
\begin{equation*}
  G(t,x)=-\left\langle B(t,x),DV(t,x)\right\rangle-\frac{1}{2}\tr_{\bR^d}\left[C(t,x)C(t,x)^\ast D^2V(t,x)\right]
\end{equation*}
is a continuous extension of
\begin{equation*}
  \frac{\partial V}{\partial t}(t,x)+\left\langle Ax,DV(t,x)\right\rangle
\end{equation*}
from $\sT\times \widetilde D$ to $[0,T]\times \widetilde E$, because $V$ satisfies Kolmogorov equation.\\
Therefore we can apply theorem \ref{thm:Ito_Banach} to obtain
\begin{align*}
  \Phi\left(X^{t,x}(T)\right)&=V\left(t,X^{t,x}(t)\right)+\int_t^TG\left(s,X^{t,x}(s)\right)\ud s+\int_t^T\langle B\left(s,X^{t,x}(s)\right),DV\left(s,X^{t,x}(s)\right)\rangle\ud s\\
  &\phantom{=}+\frac{1}{2}\int_t^T\tr_{H_1}\left[C\left(s,X^{t,x}(s)\right)C\left(s,X^{t,x}(s)\right)^\ast D^2V\left(s,X^{t,x}(s)\right)\right]\ud s\\
  &\phantom{=}+\int_t^T\langle DV\left(s,X^{t,x}(s)\right),C\left(s,X^{t,x}(s)\right)\ud W(s)\rangle\\
  &=V\left(t,X^{t,x}(t)\right)+\int_t^T\langle DV\left(s,X^{t,x}(s)\right),C\left(s,X^{t,x}(s)\right)\ud W(s)\rangle\ .
\end{align*}
The integral in the last line is actually a stochastic integral in a Hilbert space, since for every $u\in U$ $C(s,x)u$ belongs to $H_1\times\{0\}$; taking expectations in the previous identity we obtain that
\begin{equation*}
  V(t,x)=\bE\left[\Phi\left(X^{t,x}(T)\right)\right]\ .
\end{equation*}
\end{proof}

In the path-dependent case the above discussion can be rephrased as follows. Choose the spaces $H$, $E$, $\widetilde E$, $\widetilde D$, $U$ and the operator $A$ as in section \ref{sec:infinite_reformulation}. Let moreover $D\left([a,b];\bR^d\right)$ denote the space of $\bR^d$-valued càdlàg functions on the interval $[a,b]$, equipped with the supremum norm and set
\begin{equation*}
  O=\bR^d\times\left\{\phi\in D\left([-T,0);\bR^d\right)\colon \exists \lim_{s\to 0^-}\phi(s)\in\bR^d\right\}\ .
\end{equation*}
Then $E\subset O\subset H$ and $O$ is isomorphic to $D\left([-T,0];\bR^d\right)$. Through \call{eq:defM} we can define $M_T$ as a map from $O$ to $D\left([0,T];\bR^d\right)$.\\
For a given continuous path $\gamma_t\in C\left([0,t];\bR^d\right)$ consider the stochastic differential equation in $\bR^d$
\begin{equation}
  \label{eq:SDEt}
  \ud y(s)=b\left(s,y_s\right)\ud s+\sigma\left(s,y_s\right)\ud W(s)\ ,\quad s\in[t,T] ,\quad y_t=\gamma_t,
\end{equation}
and assume that $b$ and $\sigma$ are regular enough for equation \call{eq:SDEt} to have a continuous solution $y^{\gamma_t}$ (compare subsection \ref{subsec:infinite_reformulation_SDE}).\\
Choose $f\in C_b^{2,\alpha}\left(D\left([0,T];\bR^d\right)\right)$ and define 
\begin{gather}
\nonumber \Phi:O\to \bR \\ 
\label{eq:Phif}\Phi\left(x\right)=f\left(M_Tx\right)\ .
\end{gather}
If the operators $B$, $C$ are defined from $b$ and $\sigma$ as in \call{eq:B}, \call{eq:C}, we can consider the infinite-dimensional Kolmogorov backward equation
\begin{equation}
\label{eq:KolmPath}
\begin{cases}
\frac{\partial V}{\partial t}(t,x)+\left\langle DV(t,x),Ax+B\left(  t,x\right)\right\rangle+\frac{1}{2}\tr_{\bR^d}\left(  C(s,x)C(s,x)^\ast D^{2}V(t,x)\right) =0\ ,\\ V(T,\cdot)=\Phi\ \text{.}
\end{cases}
\end{equation}
We call equation \call{eq:KolmPath} the \emph{path-dependent Kolmogorov backward equation associated to the triple $(b,\sigma,f)$}.
A classical solution $V$ to equation \call{eq:KolmPath} uniquely identifies a path-dependent functional $v$, which is given by
\begin{equation}
\label{eq:defv}
  v\left(t,\gamma_t\right)=V\left(t,L^t\gamma_t\right)\ .
\end{equation}
Since $L^t\gamma_t\in \widetilde E$ if and only if $\gamma_t\in C\left([0,t];\bR^d\right)$, it is an immediate consequence of Theorem \ref{thm:uniqueness} that for every $t$ the function $v(t,\cdot)$ given by \call{eq:defv} is uniquely determined on $C\left([0,t];\bR^d\right)$.\\
Therefore our uniqueness result for path-dependent Kolmogorov equations takes the following form.
\begin{theorem}
  Let $b$ and $\sigma$ such that equation \call{eq:SDEt} has a continuous solution for every $t\in[0,T]$ and every $\gamma_t\in C\left([0,t];\bR^d\right)$. 
Then, for any $f\in C_b^{2,\alpha}\left(D\left([0,T];\bR^d\right)\right)$, any path-dependent functional $v$ such that the function $V(t,x)=v\left(t,M_tx\right)$ is a classical solution to the path-dependent Kolmogorov backward equation associated to $(b,\sigma,f)$ is uniquely determined on $C\left([0,t];\bR^d\right)$.
\end{theorem}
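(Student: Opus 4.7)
The plan is to reduce the statement to Theorem \ref{thm:uniqueness} applied to the infinite-dimensional setting of Section \ref{sec:infinite_reformulation}, and then translate back via the identifications $L^t$ and $M_t$.

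First I would verify that the hypotheses of Theorem \ref{thm:uniqueness} are met. Let $(t,x) \in [0,T] \times \widetilde E$; by the definition \call{eq:Etildepathdep} of $\widetilde E$ and Remark \ref{rem:ML}, the path $\gamma_t := M_t x$ lies in $C\left([0,t];\bR^d\right)$ and $x = L^t \gamma_t$. By hypothesis on $b$ and $\sigma$, equation \call{eq:SDEt} admits a continuous solution $y^{\gamma_t}$ on $[t,T]$ starting from $\gamma_t$. Setting $X^{t,x}(s) := L^s y^{\gamma_t}_s$, Proposition \ref{prop:reformulation} (adapted to the initial time $t$) gives that $X^{t,x}$ is a mild solution of \call{eq:SDE6}, while Proposition \ref{prop:continuity} shows that $X^{t,x}$ has almost surely continuous trajectories in $\widetilde E$. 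In particular $\left\{X^{t,x}(s)\right\}_{s\in[t,T]}$ is compact in $\widetilde E$, hence a fortiori relatively compact in $E$ because $\widetilde E \subset E$ carries the induced topology.

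Next, let $v$ be any path-dependent functional such that $V(t,x) = v(t, M_t x)$ is a classical solution to \call{eq:KolmPath} in the sense of Definition \ref{def:solE}. Since the terminal datum $\Phi$ defined by \call{eq:Phif} belongs to $C_b^{2,\alpha}(E)$ (because $f \in C_b^{2,\alpha}(D([0,T];\bR^d))$ and $M_T$ is linear and continuous), Theorem \ref{thm:uniqueness} applies to $V$ and yields the stochastic representation
\begin{equation*}
V(t,x) = \bE\left[\Phi\left(X^{t,x}(T)\right)\right]
\end{equation*}
for every $(t,x) \in [0,T] \times \widetilde E$. In particular the right-hand side depends only on the triple $(b,\sigma,f)$ and not on the particular classical solution $V$ chosen.

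Finally I would translate back to the path-dependent language. Given $\gamma_t \in C\left([0,t];\bR^d\right)$, setting $x = L^t \gamma_t \in \widetilde E$ and using Remark \ref{rem:ML}, we obtain
\begin{equation*}
v(t,\gamma_t) = v\left(t, M_t L^t \gamma_t\right) = V\left(t, L^t \gamma_t\right) = \bE\left[f\left(M_T X^{t, L^t \gamma_t}(T)\right)\right] = \bE\left[f\left(y^{\gamma_t}_T\right)\right],
\end{equation*}
where the last equality uses $M_T L^T y^{\gamma_t}_T = y^{\gamma_t}_T$. Since this expression does not depend on the solution $v$, uniqueness on $C\left([0,t];\bR^d\right)$ follows. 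The only non-routine point is checking that the infinite-dimensional process $X^{t,x}$ built from $y^{\gamma_t}$ satisfies the compactness and regularity assumptions demanded by Theorem \ref{thm:uniqueness}; this is handled precisely by Proposition \ref{prop:continuity} together with the reformulation result Proposition \ref{prop:reformulation}.
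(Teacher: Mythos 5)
Your proposal is correct and follows essentially the same route as the paper: verify the hypotheses of Theorem \ref{thm:uniqueness} via Propositions \ref{prop:reformulation} and \ref{prop:continuity} (with $X^{t,x}(s)=L^s y^{\gamma_t}_s$ and $\Phi|_E\in C_b^{2,\alpha}(E)$), then pull the uniqueness back to the path-dependent functional using $M_tL^t=\mathrm{id}$ on $C_t$ as in Remark \ref{rem:ML}. The explicit representation $v(t,\gamma_t)=\bE\left[f\left(y^{\gamma_t}_T\right)\right]$ that you spell out is exactly what the paper records in the remark following the theorem.
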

\begin{proof}
  Set $\tilde v\left(t,\gamma_t\right)=V\left(t,L^t\gamma_t\right)$. By the definition of $M_T$, the restriction of $\Phi$ to $E$ belongs to $C_b^{2,\alpha}\left(E\right)$. Thanks to the assumptions on $b$ and $\sigma$, for any $\gamma_t\in C\left([0,t];\bR^d\right)$ there exists a mild solution $X^{t,x}$ to equation \call{eq:SDE6} with $x=L^t\gamma_t\in\widetilde E$. By propositions \ref{prop:reformulation} and \ref{prop:continuity} $X^{t,x}$ takes values in $\widetilde E$ and has continuous paths with respect to the topology of $E$.\\
Therefore we can apply theorem \ref{thm:uniqueness}, hence $\tilde v$ is uniquely determined on $C\left([0,t];\bR^d\right)$; but for any $\gamma_t\in C_t$
\begin{equation*}
  v\left(t,\gamma_t\right)=v\left(t,M_tL^t\gamma_t\right)=V\left(t,L^t\gamma_t\right)=\tilde v\left(t,\gamma_t\right)\ .
\end{equation*}
\end{proof}
\begin{remark}
  For $\gamma_t\in C\left([0,t];\bR^d\right)$ and $x=\left(\gamma_t(t),L^t\gamma_t\right)^T$ the process $X^{t,x}$ in the previous proof is given by
\begin{equation*}
  X^{t,x}(s)=L^sy^{\gamma_t}_s\ .
\end{equation*}
Therefore if $v$ and $V$ are as above we have, by the definition of solution and theorem \ref{thm:uniqueness}, that
\begin{align*}
  v\left(t,\gamma_t\right)&=V\left(t,L^t\gamma_t\right)\\
  &=\bE\left[\Phi\left(X^{t,x}(T)\right)\right]\\
  &=\bE\left[f\left(M_TX^{t,x}(T)\right)\right]\\
  &=\bE\left[f\left(M_TL^Ty^{\gamma_t}_T\right)\right]\\
  &=\bE\left[f\left(y^{\gamma_t}_T\right)\right]\ .
\end{align*}
This is what one would expect to be the solution to a Kolmogorov equation with terminal condition $f$ associated (in some sense) to the SDE \call{eq:SDEt}.
\end{remark}
Notice that the extension of $\gamma_t$ introduced by the operator $L^t$ is arbitrary; nevertheless it does not play any role in the path-dependent Kolmogorov equation since $B$ and $C$ are defined using $M_t$, compare remark \ref{rem:ML}.
\subsection{Another example on Kolmogorov Equations}
\label{sec:example}
We try to identify a class of functions solving virtually a Kolmogorov type
equations. The inspiration comes from \cite{DGR}, section 9.9,
see also \cite{Cos-Rus-2100}, Theorem 3.5 for a variant.

Let $N\in \mathds{N}$, $g_{1},...,g_{N}\in BV\left( \left[ 0,T\right]
\right) $. We set $g_{0}=1$. We define by $\Sigma \left( t\right) $ the $%
\left( N+1\right) \times \left( N+1\right) $ matrix%
\[
\Sigma _{ij}\left( t\right) :=\int_{t}^{T}g_{i}\left( s\right) g_{j}\left(
s\right) ds.
\]%
We suppose that $\Sigma \left( t\right) $ \ is invertible for any $0\leq t<T$%
. We denote by%
\[
p_{t}\left( x\right) =\frac{1}{\left( 2\pi \right) ^{\frac{N+1}{2}}\sqrt{%
\det \Sigma \left( t\right) }}\exp \left( -\frac{1}{2}x^{T}\Sigma
^{-1}\left( t\right) x\right) 
\]%
the Gaussian density with covariance $\Sigma \left( t\right) $, for $t\in
\lbrack 0,T)$, $x\in \mathds{R}^{N+1}$. Let $f:\mathds{R}^{N+1}\rightarrow 
\mathds{R}$ be a continuous function with polynomial growth. We set%
\[
\widehat{g}_{j}\left( s\right) =g_{j}\left( s+T\right) 
\]%
$0\leq j\leq N$, $s\in \left[ -T,0\right] $. We consider $H:C\left( \left[
-T,0\right] \right) \rightarrow \mathds{R}$ defined by%
\[
H\left( \eta \right) =f\left( \eta \left( 0\right) ,\int_{\left[ -T,0\right]
}\widehat{g}_{1}d\eta ,...,\int_{\left[ -T,0\right] }\widehat{g}_{N}d\eta
\right) 
\],
where 
\[
\int_{\left[ -T,0\right] }\widehat{g}_{i}d\eta :=\widehat{g}_{i}\left(
0\right) \eta \left( 0\right) -\int_{\left[ -T,0\right] }\eta d\widehat{g}%
_{i}.
\]%
To simplify, let us assume $g_{i}$ continuous.

We define $\mathcal{U}:\left[ 0,T\right] \times \mathds{R}\times C\left( %
\left[ -T,0\right] \right) \rightarrow \mathds{R}$ by%
\begin{equation}
\mathcal{U}\left( t,x,\psi \right) =\widetilde{\mathcal{U}}\left( t,x,\int_{%
\left[ -T,0\right] }g_{1}\left( \cdot +t\right) d\psi ,...,\int_{\left[ -T,0%
\right] }g_{N}\left( \cdot +t\right) d\psi \right),   \label{def U}
\end{equation}%
where $\widetilde{\mathcal{U}}:\left[ 0,T\right] \times \mathds{R}\times 
\mathds{R}^{N}\rightarrow \mathds{R}$ is motivated by the following lines.

We consider the martingale%
\[
M_{t}=E\left[ h|\mathcal{F}_{t}\right] 
\]%
where (with $\widehat{W}_{s}=W_{s+T}$, $s\in \left[ -T,0\right] $), 
\[
h=H\left( \widehat{W}\right) =f\left( W_{T},\int_{0}^{T}g_{1}\left( s\right)
dW_{s},...,\int_{0}^{T}g_{N}\left( s\right) dW_{s}\right) .
\]%
We proceed by a finite-dimensional analysis. 

We remind that $\widehat{g}_{j}\left( s\right) =g_{j}\left( s+T\right) $. We
evaluate more specifically the martingale $M$. We get%
\[
M_{t}=\widetilde{\mathcal{U}}\left( t,W_{t},\int_{0}^{t}g_{1}\left( s\right)
dW_{s},...,\int_{0}^{t}g_{N}\left( s\right) dW_{s}\right),
\]%
where%
\begin{eqnarray*}
\widetilde{\mathcal{U}}\left( t,x,x_{1},...,x_{N}\right)  &=&E\left[ f\left(
x+W_{T}-W_{t},x_{1}+\int_{t}^{T}g_{1}dW,...,x_{N}+\int_{t}^{T}g_{N}dW\right) %
\right]  \\
&=&\int_{\mathds{R}^{N+1}}f\left( x+\xi _{0},x_{1}+\xi _{1},...,x_{N}+\xi
_{N}\right) p_{t}\left( \xi \right) d\xi  \\
&=&\int_{\mathds{R}^{N+1}}f\left( \xi _{0},\xi _{1},...,\xi _{N}\right)
p_{t}\left( x-\xi _{0},x_{1}-\xi _{1},...,x_{N}-\xi _{N}\right) d\xi
_{0}d\xi _{1}...d\xi _{N}.
\end{eqnarray*}%
By inspection we can show, see also \cite{DGR}, that $\widetilde{\mathcal{%
U}}\in C^{1,2}\left( [0,T)\times \mathds{R}^{N+1}\right) $%
\begin{eqnarray}
\partial _{t}\widetilde{\mathcal{U}}+\frac{1}{2}\sum_{i,j=0}^{N}\Sigma
_{ij}\left( t\right) \frac{\partial ^{2}\widetilde{\mathcal{U}}}{\partial
x_{i}\partial x_{j}} &=&0  \label{Kolmog} \\
\widetilde{\mathcal{U}}\left( T,x\right)  &=&f\left( x\right),   \nonumber
\end{eqnarray}%
where $x=\left( x_{0},x_{1},...,x_{N}\right) $. This can be done via the
property of the density kernel $\left( t,\xi \right) \mapsto p_{t}\left( \xi
\right) $ and classical integration theorems. We set $\mathcal{U}:\left[ 0,T%
\right] \times \mathds{R}\times C\left( \left[ -T,0\right] \right)
\rightarrow \mathds{R}$ as in (\ref{def U}).

\begin{proposition}
Let $C^{2}:=C^{2}\left( \left[ -T,0\right] \right) $. The map $\mathcal{U}$
has the following properties:

i) $\mathcal{U\in }C^{0,2,0}$

ii) $\mathcal{U\in }C^{1,2,1}\left( \left[ 0,T\right] \times \mathds{R}%
\times C^{2}\right) $

iii) the map%
\[
\left( t,x,\psi \right) \longmapsto \mathcal{A}\left( \mathcal{U}\right)
\left( t,x,\psi \right) :=\partial _{t}\mathcal{U}\left( t,x,\psi \right)
+\left\langle D^{\psi }\mathcal{U}\left( t,x,\psi \right) ,\psi ^{\prime
}\right\rangle 
\]%
extends continuously on $\left[ 0,T\right] \times \mathds{R}\times C\left( %
\left[ -T,0\right] \right) $ to an operator still denoted by $\mathcal{A}%
\left( \mathcal{U}\right) \left( t,x,\psi \right) $

iv) 
\[
\partial _{t}\mathcal{U+A}\left( \mathcal{U}\right) +\frac{1}{2}\partial
_{xx}^{2}\mathcal{U}=0.
\]
\end{proposition}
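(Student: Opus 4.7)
The strategy is a chain-rule argument that pushes forward the explicit regularity and Kolmogorov identity available for the finite-dimensional Gaussian-kernel function $\widetilde{\mathcal U}$ through the substitution (\ref{def U}) defining $\mathcal U$. The key ingredient is the family of linear functionals
\[
L_i(t,\psi):=\int_{[-T,0]}g_i(\cdot+t)\,d\psi=g_i(t)\psi(0)-\int_{[-T,0]}\psi(s)\,dg_i(s+t),\qquad i=1,\dots,N,
\]
which, because $g_i$ is continuous of bounded variation, are jointly continuous on $[0,T]\times C([-T,0])$ and linear in $\psi$. Since $\widetilde{\mathcal U}$ is $C^{\infty}$ in its $\mathds R^{N+1}$-argument (Gaussian convolution) and $C^{1}$ in $t$ on $[0,T)$ with the Kolmogorov identity (\ref{Kolmog}), part (i) follows directly from composition: $\mathcal U$, $D_{x}\mathcal U=\partial_{x}\widetilde{\mathcal U}$ and $D_{xx}^{2}\mathcal U=\partial_{xx}^{2}\widetilde{\mathcal U}$ are continuous on $[0,T]\times\mathds R\times C([-T,0])$.

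For (ii) one restricts to $\psi\in C^{2}$. A further integration by parts rewrites $L_i(t,\psi)=\psi(-T)g_i(t-T)+\int_{-T}^{0}g_i(s+t)\psi'(s)\,ds$, which, via absolute continuity of $g_i$ and a change of variable in the integral, can be differentiated in $t$ to produce a jointly continuous $\partial_{t}L_i$ on $[0,T]\times C^{2}$. The Fréchet derivative of $\mathcal U$ in $\psi$ is the continuous linear map $\phi\mapsto\sum_{i}\partial_{x_i}\widetilde{\mathcal U}\,L_i(t,\phi)$. Assembling the chain rule yields $\mathcal U\in C^{1,2,1}([0,T]\times\mathds R\times C^{2})$ with
\[
\partial_{t}\mathcal U=\partial_{t}\widetilde{\mathcal U}+\sum_{i=1}^{N}\partial_{x_i}\widetilde{\mathcal U}\cdot\partial_{t}L_i,\qquad \langle D^{\psi}\mathcal U,\psi'\rangle=\sum_{i=1}^{N}\partial_{x_i}\widetilde{\mathcal U}\cdot L_i(t,\psi').
\]

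For (iii), the heart of the argument is to show that, for $\psi\in C^{2}$, the sum $\partial_{t}L_i(t,\psi)+L_i(t,\psi')$ is in fact free of the derivatives of $\psi$: the boundary term $g_i(t)\psi'(0)$ produced by $L_i(t,\psi')$ is cancelled by an analogous boundary contribution in $\partial_{t}L_i$, while the integrals against $\psi''$ pair up and annihilate by Stieltjes integration by parts, leaving a quantity $\mathcal{L}_i(t,\psi)$ that depends linearly and continuously on $\psi\in C([-T,0])$. Consequently
\[
\mathcal A(\mathcal U)(t,x,\psi)=\partial_{t}\widetilde{\mathcal U}\bigl(t,x,L(t,\psi)\bigr)+\sum_{i=1}^{N}\partial_{x_i}\widetilde{\mathcal U}\bigl(t,x,L(t,\psi)\bigr)\,\mathcal{L}_i(t,\psi)
\]
is the desired continuous extension. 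Finally, for (iv), one inserts the Kolmogorov identity (\ref{Kolmog}) for $\widetilde{\mathcal U}$ into the preceding expression for $\mathcal A(\mathcal U)$ and matches it with $\tfrac{1}{2}\partial_{xx}^{2}\mathcal U=\tfrac{1}{2}\partial_{xx}^{2}\widetilde{\mathcal U}$, obtaining the claimed identity after a rearrangement that uses the explicit forms of $\mathcal L_i$ and of the covariances $\Sigma_{ij}(t)=\int_{t}^{T}g_i g_j\,ds$.

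The main obstacle is the cancellation in (iii): keeping track of the boundary contributions at $s=0$ and $s=-T$ produced by the Stieltjes conventions defining $L_i$ is delicate, since boundary and interior terms must conspire so that the resulting $\mathcal L_i$ no longer involves $\psi'$. This is exactly the point where the special structure of $\mathcal U$ — a smooth $\widetilde{\mathcal U}$ composed with the shift-dependent linear functionals $L_i$ — is used in an essential way.
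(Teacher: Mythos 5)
Your overall strategy is the same as the paper's: push the regularity of $\widetilde{\mathcal{U}}$ and the finite--dimensional identity (\ref{Kolmog}) through the composition (\ref{def U}) by the chain rule, and get (iii) from a cancellation of the $\psi'$--dependent terms. The genuine gap is at the heart of (iii): you never compute the remainder $\mathcal{L}_i$, you only assert that after the cancellation it is free of $\psi'$ and extends continuously to $C([-T,0])$. The paper's proof consists precisely in showing that the cancellation is \emph{exact}, i.e.\ $\mathcal{L}_i\equiv 0$: for $\psi\in C^{2}$, and taking $g_j(0)=0$ without loss of generality, one has
\[
\frac{d}{dt}\int_{[-t,0]}g_j(\cdot+t)\,d\psi=\int\psi'(\xi-t)\,g_j'(d\xi),
\]
a Stieltjes integral against $dg_j$ (the $g_j$ are only continuous of bounded variation, so your appeal to absolute continuity of $g_i$ in (ii) is not available), while with the representation used in the paper, $\int_{[-t,0]}g_j(\cdot+t)\,d\psi=-\int_{(0,t]}\psi(l-t)\,dg_j(l)$, the Fr\'echet derivative in the direction $\psi'$ is exactly the negative of that Stieltjes integral. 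Hence $\partial_t\mathcal{U}+\langle D^{\psi}\mathcal{U},\psi'\rangle=\partial_t\widetilde{\mathcal{U}}\bigl(t,x,\int g_1(\cdot+t)d\psi,\dots\bigr)$ with \emph{no} extra term, and (iii) is immediate because the right-hand side is a composition of the continuous $\partial_t\widetilde{\mathcal{U}}$ with linear functionals that are continuous in the sup norm. Your description of the mechanism is not what a careful computation gives: after one integration by parts $\partial_t L_i$ reduces to the single Stieltjes integral above (its own boundary contributions cancel internally), so there is no leftover boundary term of $\partial_tL_i$ available to cancel $g_i(t)\psi'(0)$ \emph{and} simultaneously no $\psi''$--integrals left to pair up; depending on which representation of $L_i$ one differentiates, the candidate remainder is either $0$ or $g_i(t)\psi'(0)$, and the latter is exactly the kind of quantity that does \emph{not} extend continuously in the sup norm. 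So the continuity of a nonzero $\mathcal{L}_i$ cannot be taken for granted: you must either prove it vanishes (careful bookkeeping of the Stieltjes conventions, as in the paper) or exhibit it explicitly.

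Because $\mathcal{L}_i$ is left undetermined, (iv) is not actually proved in your write-up: the announced ``rearrangement using the explicit forms of $\mathcal{L}_i$ and of the covariances $\Sigma_{ij}$'' cannot be carried out without those forms. In the paper, once $\mathcal{A}(\mathcal{U})=\partial_t\widetilde{\mathcal{U}}$ is established, $\partial_{xx}^{2}\mathcal{U}$ is just the second derivative of $\widetilde{\mathcal{U}}$ in its $x_0$--slot and (iv) follows by inspection from (\ref{Kolmog}); your parts (i) and (ii) are otherwise in line with the paper's argument.
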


\begin{proof}
i) Obvious.

 ii) We evaluate the different derivatives for $\left( t,x,\psi \right) \in %
\left[ 0,T\right] \times \mathds{R}\times C^{2}$. We get from (\ref{def U})%
\[
\partial _{t}\mathcal{U}\left( t,x,\psi \right) =\partial _{t}\widetilde{%
\mathcal{U}}\left( t,x,\int_{\left[ -t,0\right] }g_{1}\left( \cdot +t\right)
d\psi ,...,\int_{\left[ -t,0\right] }g_{N}\left( \cdot +t\right) d\psi
\right) 
\]%
\begin{equation}
+\sum_{j=1}^{N}\partial _{j}\widetilde{\mathcal{U}}\left( t,x,\int_{\left[
-t,0\right] }g_{1}\left( \cdot +t\right) d\psi ,...,\int_{\left[ -t,0\right]
}g_{N}\left( \cdot +t\right) d\psi \right) \frac{d}{dt}\int_{\left[ -t,0%
\right] }g_{j}\left( \cdot +t\right) d\psi .  \label{ident 3}
\end{equation}%
Now we observe that 
\begin{eqnarray*}
\frac{d}{dt}\int_{\left[ -t,0\right] }g_{j}\left( \cdot +t\right) d\psi  &=&%
\frac{d}{dt}\int_{\left[ -t,0\right] }g_{j}\left( \xi \right) \psi ^{\prime
}\left( \xi -t\right) d\xi  \\
&=&g_{j}\left( t\right) \psi ^{\prime }\left( 0\right) -\int_{\left[ -t,0%
\right] }g_{j}\left( \xi \right) \psi ^{\prime \prime }\left( \xi -t\right)
d\xi 
\end{eqnarray*}%
\begin{equation}
=\int_{\left[ -t,0\right] }\psi ^{\prime }\left( \xi -t\right) g_{j}^{\prime
}\left( d\xi \right)   \label{ident 4}
\end{equation}%
(remark that, without restriction of generality, we can take $g_{j}\left(
0\right) =0$). Now we calculate%
\begin{eqnarray}
&&\left\langle D^{\psi }\mathcal{U}\left( t,x,\psi \right) ,\psi ^{\prime
}\right\rangle   \label{ident 5} \\
&=&\sum_{j=1}^{N}\partial _{j}\widetilde{\mathcal{U}}\left( t,x,\int_{\left[
-t,0\right] }g_{1}\left( \cdot +t\right) d\psi ,...,\int_{\left[ -t,0\right]
}g_{N}\left( \cdot +t\right) d\psi \right) \left\langle D^{\psi }\int_{\left[
-t,0\right] }g_{j}\left( \cdot +t\right) d\psi ,\psi ^{\prime }\right\rangle
.  \nonumber
\end{eqnarray}%
Now the application%
\begin{eqnarray*}
\psi  &\longmapsto &\int_{\left[ -t,0\right] }g_{j}\left( \cdot +t\right)
d\psi =\int_{\left[ -t,0\right] }g_{j}\left( \xi +t\right) \psi ^{\prime
}\left( \xi \right) d\xi  \\
&=&-\int_{\left[ -t,0\right] }d\psi \left( \xi \right) \int_{(\xi
+t,0]}dg_{j}\left( l\right) =-\int_{(0,t]}dg_{j}\left( l\right)
\int_{[l-t,0)}d\psi \left( \xi \right)  \\
&=&-\int_{(0,t]}dg_{j}\left( l\right) \psi \left( l-t\right) 
\end{eqnarray*}%
has to be differentiated in the direction $\psi ^{\prime }$. Taking into
account (\ref{ident 3}), (\ref{ident 4}), (\ref{ident 5}), it follows that 
\begin{equation}
\partial _{t}\mathcal{U}\left( t,x,\psi \right) +\left\langle D^{\psi }%
\mathcal{U}\left( t,x,\psi \right) ,\psi ^{\prime }\right\rangle =\partial
_{t}\widetilde{\mathcal{U}}\left( t,x,\int_{\left[ -t,0\right] }g_{1}\left(
\cdot +t\right) d\psi ,...,\int_{\left[ -t,0\right] }g_{N}\left( \cdot
+t\right) d\psi \right)   \label{ident 6}
\end{equation}%
for every $\psi \in C^{2}$. On the other hand by (\ref{Kolmog}) it follows
that $\mathcal{U\in }C^{1,2,1}\left( \left[ 0,T\right] \times \mathds{R}%
\times C^{2}\right) $.

 iii) By (\ref{ident 6}), for $\left( t,x,\psi \right) \in \left[ 0,T\right]
\times \mathds{R}\times C^{2}$, we get%
\[
\mathcal{A}\left( \mathcal{U}\right) =\partial _{t}\widetilde{\mathcal{U}}%
\left( t,x,\int_{\left[ -t,0\right] }g_{1}\left( \cdot +t\right) d\psi
,...,\int_{\left[ -t,0\right] }g_{N}\left( \cdot +t\right) d\psi \right) .
\]

 iv) This claim follows by inspection, taking into account (\ref{Kolmog}).
\end{proof}

\begin{acknowledgements}
The second named author benefited partially from the support of the ``FMJH Program Gaspard Monge in optimization and operation research'' (Project 2014-1607H). He is also grateful for the invitation to the Department of Mathematics of the University of Pisa.
The third named author is grateful for the invitation to ENSTA. 
\end{acknowledgements}


\bibliographystyle{spmpsci}
\bibliography{biblio}

%
%

\end{document}